\newtheorem{thm}{Theorem}
\newtheorem{lem}[thm]{Lemma}
\newtheorem{prop}[thm]{Proposition}
\newtheorem{rk}[thm]{Remark}
\newtheorem{nota}[thm]{Notation}
\newcommand{\R}{{\mathbb{R}}}
\newcommand{\Z}{{\mathbb{Z}}}
\newcommand{\cF}{{\mathcal{F}}}
\newcommand{\cU}{{\mathcal{U}}}
\newcommand{\cZ}{{\mathcal{Z}}}
\newcommand{\Law}{{\rm Law}}
\newcommand{\cL}{{\mathcal{L}}}
\newcommand{\cE}{{\mathcal{E}}}
\newcommand{\e}{\varepsilon}
\newcommand{\vip}{\vskip.15cm}
\newcommand{\indiq}{{\bf 1}}
\newcommand{\E}{\mathbb{E}}
\newcommand{\PR}{\mathbb{P}}
\newcommand{\intot}{\int_0^t }
\newcommand{\intrd}{\int_{\R^d}}
\newcommand{\intMK}{\int_{M_K} }
\newcommand{\dd}{{\rm d}}
\newcommand{\ddiv}{{\rm div}}
\newcommand{\ttau}{{\tilde \tau}}
\newcommand{\bB}{{\bar B}}
\newcommand{\dB}{{\partial B}}
\begin{document}

\title{On the simulated annealing in $\R^d$}

\author{Nicolas Fournier and Camille Tardif}

\address{Nicolas Fournier and Camille Tardif : Sorbonne Universit\'e, 
LPSM-UMR 8001,
Case courrier 158, 75252 Paris Cedex 05, France.}
\email{nicolas.fournier@sorbonne-universite.fr, camille.tardif@sorbonne-universite.fr}


\thanks{We warmly thank Pierre Monmarch\'e for fruitful discussions}

\subjclass[2010]{60J60}

\keywords{Simulated annealing, time-inhomogeneous diffusion processes, large time behavior, non-explosion}

\begin{abstract}
Using a localization procedure and the result of 
Holley-Kusuoka-Stroock \cite{hks} in the torus, 
we widely weaken the usual growth assumptions
concerning the success of the continuous-time simulated annealing in $\R^d$.
Our only assumption is the existence of an invariant probability measure
for a sufficiently low temperature.
We also prove, in an appendix, a non-explosion criterion for a class of time-inhomogeneous diffusions.
\end{abstract}

\maketitle

\section{Introduction and results}

\subsection{Main results}

We work with the following setting.

\vip

\noindent {\bf Assumption} $(A)$. {\it Fix a dimension $d\geq 1$
and a function $U:\R^d \to \R_+$ of class $C^\infty$ such that 
$\lim_{|x|\to \infty} U(x)=\infty$ and $\min_{x\in \R^d} U(x)=0$. 
For $x,y\in \R^d$, we set 
$$
E(x,y)=\inf \Big\{\max_{t\in [0,1]}U(\gamma_t)-U(x)-U(y) \;:\; 
\gamma \in C([0,1],\R^d), \gamma_0=x,\gamma_1=y \Big\}
$$
and we suppose that $c_*=\sup\{E(x,y) : x,y\in\R^d\}<\infty$.}

\vip

Actually, $c_*=\sup\{E(x,y):x$ local minimum of $U$, $y$ global minimum of $U\}$
represents the maximum potential energy required to reach a global minimum $y$ of $U$ when starting 
from {anywhere else.}

\vip

We fix $x_0 \in \R^d$, $c>0$ and $\beta_0>0$
and consider the time-inhomogeneous S.D.E.
\begin{equation}\label{eds}
X_t = x_0 + B_t - \frac 12\intot \beta_s \nabla U(X_s) \dd s \quad \hbox{where} 
\quad \beta_t=\frac{\log (e^{c \beta_0}+t)}{c}
\end{equation}
and where $(B_t)_{t\geq0}$ is a $d$-dimensional Brownian motion.
By Theorem \ref{nonex} proved in the appendix, since $U\geq 0$ under $(A)$, 
\eqref{eds} has a pathwise unique non-exploding solution $(X_t)_{t\geq 0}$.
Here is our main result.

\begin{thm}\label{main}
Assume $(A)$ and that $\intrd e^{-\alpha_0 U(x)}\dd x <\infty$ 
for some $\alpha_0>0$. Fix $c>c_*$, $x_0 \in \R^d$ and $\beta_0 >0$ and consider the unique solution
$(X_t)_{t\geq 0}$ to \eqref{eds}. Then
$\lim_{t\to \infty}U(X_t)= 0$ in probability.
\end{thm}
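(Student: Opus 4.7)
The plan is to localize and reduce to the compact-manifold setting of \cite{hks}. Since $U\to\infty$, each sublevel set $M_K=\{U\le K\}$ is compact. For $K>0$ (eventually sent to $\infty$), pick $R=R(K)$ with $M_K\subset (-R,R)^d$ and construct a $C^\infty$, $(2R\Z)^d$-periodic potential $U_K:\R^d\to\R_+$ that agrees with $U$ on a neighborhood of $M_K$, is bounded, and, viewed on the torus $\mathbb{T}^d=\R^d/(2R\Z)^d$, has a critical barrier constant $c_{*,K}$ (defined as in $(A)$ but for paths in $\mathbb{T}^d$) satisfying $c_{*,K}\le c_*<c$; this can be arranged by smoothly capping $U$ at height $c_*+1$ outside $M_{c_*+1}$. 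Let $\tilde X^K$ solve the corresponding SDE on $\mathbb{T}^d$ with $U_K$ and the same $\beta_t$, and let $Y^K$ denote its continuous $\R^d$-lift started at $x_0$. Because $\nabla U=\nabla U_K$ on $M_K$, pathwise uniqueness gives $X_t=Y^K_t$ on $[0,\tau^X_K]$, where $\tau^X_K=\inf\{s\ge 0:U(X_s)\ge K\}$.

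Consequently, for every $\e>0$,
\begin{equation*}
\PR(U(X_t)>\e)\le \PR\bigl(U_K(\tilde X^K_t)>\e\bigr)+\PR(\tau^X_K\le t).
\end{equation*}
The first term vanishes as $t\to\infty$ for each fixed $K$, by the Holley--Kusuoka--Stroock theorem on the compact torus (applicable since $c>c_{*,K}$ and $U_K\in C^\infty(\mathbb{T}^d)$). Hence the theorem reduces to the uniform-in-time escape estimate
\begin{equation*}
\lim_{K\to\infty}\sup_{t\ge 0}\PR(\tau^X_K\le t)=0.
\end{equation*}

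This last bound is the main obstacle, and I would attack it with a Lyapunov argument. A natural choice is $V(x)=e^{\alpha U(x)}$ with some fixed $\alpha\in(0,\alpha_0)$, for which the time-$s$ generator gives
\begin{equation*}
\cL_s V(x)=\tfrac{\alpha}{2}V(x)\bigl(\Delta U(x)-(\beta_s-\alpha)|\nabla U(x)|^2\bigr).
\end{equation*}
For $s$ large enough that $\beta_s>\alpha$, the bracket is expected to be non-positive outside a compact set under the assumption $\intrd e^{-\alpha_0 U}\dd x<\infty$, via a standard integration-by-parts argument exploiting the integrability of both $\Delta U\,e^{-\alpha_0 U}$ and $|\nabla U|^2 e^{-\alpha_0 U}$ to compare their growth at infinity. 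Applying Itô's formula to $V(X_{t\wedge \tau^X_K\wedge n})$, passing $n\to\infty$ via the non-explosion provided by Theorem \ref{nonex}, and using Doob's maximal inequality, one would obtain $\sup_{t\ge 0}\PR(\tau^X_K\le t)\le Ce^{-\alpha K}$ for some constant $C$. Combining this with the previous display and letting first $t\to\infty$ and then $K\to\infty$ concludes the proof.
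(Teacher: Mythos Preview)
Your localization step and the decomposition
\[
\PR(U(X_t)>\e)\le \PR\bigl(U_K(\tilde X^K_t)>\e\bigr)+\PR(\tau^X_K\le t)
\]
are fine and essentially coincide with the paper's Lemma~\ref{fmt} and Remark~\ref{compegpascomp}. The genuine gap is in your treatment of the escape term.

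The Lyapunov argument you sketch does not go through under the sole hypothesis $\intrd e^{-\alpha_0 U}\dd x<\infty$. The identity $\cL_s V=\tfrac{\alpha}{2}V\,(\Delta U-(\beta_s-\alpha)|\nabla U|^2)$ is correct, but the claim that the bracket is nonpositive outside a compact set is exactly the growth condition \eqref{bg} ($\Delta U\le C+|\nabla U|^2$) that earlier works assume and that the present paper is written to avoid. An integral relation such as $\int\Delta U\,e^{-\alpha_0 U}=\alpha_0\int|\nabla U|^2 e^{-\alpha_0 U}$ (even if you could justify both sides being finite, which is not given) says nothing pointwise: one can build smooth $U$ with $\intrd e^{-\alpha_0 U}<\infty$ that oscillate so that $\Delta U$ is arbitrarily large on sets where $|\nabla U|$ is small, arbitrarily far out. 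The paper stresses this explicitly (``nothing forbids $U$ to oscillate, as strongly as it wants'') and notes that an It\^o/Lyapunov route to a uniform moment bound ``would necessarily require some stringent conditions on $\nabla U$ and $\Delta U$''.

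The paper replaces your Lyapunov step by two ingredients. First, Lemma~\ref{super} differentiates the \emph{unnormalized} relative entropy $\intrd f_t\log(1+f_t e^{\beta_t U})$ with respect to $\mu_{\beta_t}$ and, via a Gr\"onwall argument, obtains $\sup_{t\ge 0}\E[U(X_t)]<\infty$ from the integrability assumption alone; this entropy computation is the new idea and uses no pointwise control on $\Delta U$ or $\nabla U$. By Fatou this yields only $\liminf_{t\to\infty}|X_t|<\infty$ a.s., not your stronger $\PR(\sup_t U(X_t)\ge K)\to 0$. The upgrade to the latter is a second, separate argument (Proposition~\ref{crucial} and the proof of Proposition~\ref{mm}): each time the process returns to $\{U\le A\}$ after a late enough time, it has probability at least $1/2$ of never again exceeding level $K_A$, uniformly in $\beta_0$; a Borel--Cantelli with stopping times then forces $\sup_t U(X_t)<\infty$ a.s. Getting the uniformity in $\beta_0$ requires redoing the Holley--Kusuoka--Stroock estimates with careful bookkeeping (Lemmas~\ref{hksb}--\ref{cs}) and is the technical heart of the paper.
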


One of the ingredients of the proof is the following proposition, which asserts that, in full generality,
the simulated annealing is successful on the event where the process $(X_t)_{t\geq 0}$
does not escape to infinity in large time.

\begin{prop}\label{mm}
Assume $(A)$, fix $c>c_*$, $x_0\in \R^d$ and $\beta_0 >0$.
For $(X_t)_{t\geq 0}$ the solution to \eqref{eds},
$$
\forall \; \e>0,\quad 
\lim_{t\to \infty}  \PR\Big(\liminf_{s\to\infty} |X_s|<\infty  \hbox{ and } U(X_t)>\e\Big)=0.
$$
\end{prop}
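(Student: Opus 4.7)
The plan is to reduce the $\R^d$ problem to a compact (torus) setting where the Holley--Kusuoka--Stroock theorem applies, and then to combine this with a coupling argument and a decomposition of the probability according to whether $X$ has exited a large ball by time $t$.

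First, for each large integer $K$, I would construct a smooth function $U_K : M_K \to \R_+$ on the flat torus $M_K = (\R/2K\Z)^d$, viewed via the fundamental domain $[-K,K]^d$, satisfying: (i) $U_K = U$ on $\bar B_{K-1}$; (ii) $U_K$ equals a large constant $M$ on a neighbourhood of $\partial[-K,K]^d$, so that the periodic extension is $C^\infty$ and the minima of $U$ (which lie in $\bar B_{K-1}$ for $K$ large) are exactly the minima of $U_K$; (iii) $c_*(U_K) \leq c_*(U) < c$, since once $M$ exceeds the relevant saddle values of $U$, passing through the constant plateau never provides a cheaper barrier. I would then consider the torus-valued SDE $\tilde X^K_t = x_0 + B_t - \tfrac 12 \intot \beta_s \nabla U_K(\tilde X^K_s) \dd s$ driven by the same Brownian motion. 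Since $c > c_*(U_K)$, the result of Holley--Kusuoka--Stroock \cite{hks} on the compact manifold $M_K$ yields $U_K(\tilde X^K_t) \to 0$ in probability. By pathwise uniqueness for locally Lipschitz SDEs on $\R^d$ applied to a lift of $\tilde X^K$, the two processes agree up to $\tau_K = \inf\{s \geq 0 : |X_s| \geq K-1\}$, and on that interval $U(X_s) = U_K(\tilde X^K_s)$.

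This yields the decomposition
$$
\PR\Big(\liminf_{s\to\infty} |X_s| < \infty,\, U(X_t) > \e\Big) \leq \PR\big(U_K(\tilde X^K_t) > \e\big) + \PR\Big(\tau_K \leq t,\, \liminf_{s\to\infty} |X_s| < \infty\Big).
$$
For each fixed $K$, the first term on the right vanishes as $t \to \infty$ by the HKS step.

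The main obstacle is the second term. It increases in $t$ to $\PR(\tau_K<\infty,\, \liminf_s |X_s|<\infty)$, which decreases as $K\to \infty$ to $\PR(\sup_s |X_s| = \infty,\, \liminf_s|X_s|<\infty)$, the probability of an ``oscillating'' trajectory. The heart of the argument is to show that this event has probability zero, that is, almost surely $X$ is either eventually bounded or escapes to infinity. I would prove this dichotomy by a strong-Markov argument at the successive return times of $X$ to a fixed ball $\bar B_R$: each excursion reaching level $K$ must cross a potential barrier against a drift $-\tfrac12 \beta_s \nabla U$ whose strength grows like $\log s$, so the conditional probability of such an excursion after the $n$-th return is summable in $n$, and a Borel--Cantelli-type argument rules out infinitely many large excursions coexisting with infinitely many returns to $\bar B_R$. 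Combining this dichotomy with the coupling/HKS estimate above, and sending first $t\to\infty$ and then $K\to\infty$ in the decomposition, gives the desired limit.
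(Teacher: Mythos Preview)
Your overall architecture is exactly that of the paper: localise to a torus so that Holley--Kusuoka--Stroock applies (this is the paper's Lemma~\ref{fmt}), and then reduce everything to the dichotomy
\[
\PR\Big(\sup_{s\geq 0}|X_s|=\infty \;\hbox{and}\; \liminf_{s\to\infty}|X_s|<\infty\Big)=0,
\]
which the paper proves as Step~2 of the proof of Proposition~\ref{mm}. Your decomposition and the torus-coupling step are fine.

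The genuine gap is in your justification of the dichotomy. You propose to bound, at each return to $\bar B_R$, the conditional probability of a large excursion by invoking that ``the drift $-\tfrac12\beta_s\nabla U$ has strength growing like $\log s$''. Under Assumption~$(A)$ alone this does not yield a usable barrier-crossing estimate: nothing is assumed about $\nabla U$ or $\Delta U$, so the It\^o expansion of $U(X_t)$ contains the uncontrolled term $\tfrac12\int\Delta U(X_s)\,\dd s$, and there is no reason the drift points ``inward'' along the excursion. This is precisely why the paper avoids any direct pathwise barrier argument. What is actually needed is a bound of the form
\[
\inf_{x_0\in\{U\leq A\},\;\beta_0\geq b_A}\PR_{\beta_0,x_0}\Big(\sup_{t\geq 0}U(X_t)\leq K_A\Big)\geq \tfrac12,
\]
\emph{uniform in the starting inverse temperature} $\beta_0$. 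This is Proposition~\ref{crucial}, and the paper stresses that it cannot be read off from \cite{hks,mic2}; its proof occupies most of Section~\ref{imp2} and requires redoing the spectral-gap/$L^2$ argument on the torus (Lemmas~\ref{hksb} and~\ref{cs}) while tracking the $\beta_0$-dependence through the regularisation Lemma~\ref{tau}, followed by an up-crossing argument. Once Proposition~\ref{crucial} is available, your Borel--Cantelli step goes through with the conditional probability bounded by $1/2$ (summability is not needed). In short: your outline is correct, but the sentence you wrote for the key estimate hides essentially all of the work, and the heuristic you give for it would require growth conditions on $U$ that the paper is specifically designed to avoid.
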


\subsection{Comments and references}
The simulated annealing has been introduced by 
Kirkpatrick-Gelatt-Vecchi \cite{kgv} as a numerical procedure to find a 
(possibly non unique) 
global minimum of a function $U$ on a given state space. We refer to Azencott \cite{a} 
for an early review of the method and its links with the theory of Freidlin-Wentzell 
\cite{fw}.

\vip

With our notation and in our context where the state space is $\R^d$, 
the main idea of the simulated annealing is the following. 
The solution to \eqref{eds}, with $\beta$
constant, has $\mu_\beta(\dd x)=\cZ_\beta^{-1} e^{-\beta U(x)} \dd x$ as invariant 
probability distribution, if $\cZ_\beta=\intrd e^{-\beta U(x)} \dd x<\infty$.

\vip

Using that $\min U =0$ and that $\nabla U$ is locally bounded, we deduce 
that there is $\kappa>0$ such that $\cZ_\beta \geq e^{-1}\int_{\{U\leq 1/\beta\}} \dd x\geq
\kappa \beta^{-d}$ for all $\beta\geq 1$.
Hence under the condition that $\cZ_{\alpha_0}<\infty$ for some $\alpha_0>0$, it
holds that for all $\e>0$, 
\begin{equation}\label{pchc}
\mu_\beta(U>\e) \leq \cZ_\beta^{-1}\int_{\{U>\e\}}e^{-\beta U(x)}\dd x \leq  \frac{\beta^d}\kappa
\cZ_{\alpha_0} e^{-(\beta-\alpha_0) \e} \longrightarrow 0\quad \hbox{as $\beta\to\infty$.}
\end{equation}
Hence one hopes that the solution to \eqref{eds},
with $\lim_{t\to \infty} \beta_t=\infty$, satisfies $\lim_{t\to \infty} U(X_t)=0$ in probability.
However, it is necessary that $\beta_t$ 
increases sufficiently slowly to infinity, so that $\Law(X_t)$ remains close, 
for all times, to $\mu_{\beta_t}$. If $\beta_t$ increases too fast to infinity, one may
remain stuck near a local minimum of $U$, as in the classical 
deterministic gradient method.

\vip

A major contribution is due to Holley-Kusuoka-Stroock \cite{hks}, see also
Holley-Stroock \cite{hs}. Replacing
$\R^d$ by a compact manifold $M$, they showed that when 
$\beta_t \simeq c^{-1}\log (1+t)$,
the simulated annealing procedure
is successful, i.e. $\lim_{t\to \infty} U(X_t)=0$ in probability, if and only if $c>c_*$.
Their proof is almost purely analytic and very elegant.
It relies on precise spectral gap estimates providing 
an asymptotically optimal Poincar\'e inequality.
They use at many places the compactness of the state space.

\vip

This kind of proof involving functional inequalities
has been extended to the non-compact case of $\R^d$ by Royer \cite{r}
and Miclo \cite{mic}, at the price of many growth conditions on $U$, like
\begin{equation}\label{bg}
\lim_{|x|\to \infty} U(x)= \lim_{|x|\to \infty} |\nabla U(x)|=\infty \quad \hbox{and}
\quad \forall \; x\in\R^d, \quad \Delta U(x) \leq C + |\nabla U(x)|^2.
\end{equation}
Zitt \cite{z}, taking advantage of some weak Poincar\'e inequalities,
worked under another set of rather stringent conditions, still implying that
all the local minima of $U$ are lying in a compact set.
He in particular assumes that $|\nabla U|$ is bounded and that there is $\e>0$
such that, for all $x$ outside a compact, $U(x) \geq \log^{1+\e} |x|$ and $\Delta U(x) \leq 0$.
\vip

Here we only assume that $\intrd e^{-\alpha_0 U(x)}\dd x<\infty$ for some $\alpha_0>0$,
which seems very natural in view of \eqref{pchc}.
This covers and consequently extends the previously cited works in $\R^d$. 
In particular, nothing forbids $U$ to oscillate, as strongly as it wants,
and as far as it wants from compact sets, and thus in particular
to have an unbounded set of local minima.

\subsection{Short heuristics}
Let us emphasis that our proof
relies on the following two main points.
An entropy computation,
see Lemma \ref{super}, shows that the condition
$\intrd e^{-\alpha_0 U(x)}\dd x <\infty$ implies that $\liminf_{t\to \infty} |X_t|<\infty$ a.s.
Now, recall that in the compact case, see Holley-Kusuoka-Stroock \cite{hks} or Miclo \cite{mic2},
its a.s. holds that $\limsup_{t\to \infty} U(X_t)=c$.
Combining these two points, its seems rather clear from the Borel-Cantelli Lemma that,
in the non compact setting, the process will also satisfy
$\limsup_{t\to \infty} U(X_t)=c$ a.s. Hence it will eventually remain in a compact set
and Theorem \ref{main} will follow from the compact case.

\vip

Apart from Lemma \ref{super}, which seems new and efficient,
there are a number of technical issues, that are detailed in the next subsection.

\subsection{Plan of the proof}

We denote by $(X_t)_{t\geq 0}$ the solution to \eqref{eds}. We 
assume $(A)$ and the conditions that 
$\intrd e^{-\alpha_0 U(x)}\dd x<\infty$ for some $\alpha_0>0$ and $c>c_*$.

\vip

(a) In Section \ref{pr}, we prove some auxiliary weak regularization 
property for
the law of the solution to \eqref{eds}. This allows us, when applying P.D.E. techniques,
to do as if the law of $X_0$ had a bounded density concentrated around $x_0$,
with a precise bound as a function of $\beta_0$.

\vip

(b) In Section \ref{imp1}, we show that
$\liminf_{t\to \infty} |X_t|<\infty$: the process cannot escape to infinity in large time.
This does not use the condition $c>c_*$.
The key argument is the following:
under the additional assumptions that 
$\Law(X_0)$ is smooth and $\beta_0>\alpha_0$, we prove
the important {\it a priori} 
estimate $\sup_{t\geq 0} \E[U(X_t)]<\infty$, see Lemma \ref{super}, which {\it a priori} implies that
$\liminf_{t\to \infty} |X_t|<\infty$ by the Fatou lemma and since $\lim_{|x|\to\infty} U(x)=\infty$.
We then make all this rigorous and get rid of the additional assumptions using point (a) and that
our process does not explode in finite time.

\vip

This central {\it a priori} estimate is derived from a rather original entropy computation.
Let us mention that deducing that $\sup_{t\geq 0} \E[U(X_t)]<\infty$ directly from the It\^o
formula would necessarily require some stringent conditions on $\nabla U$ and $\Delta U$. 

\vip

(c) In Section \ref{imp2}, we verify in Lemma \ref{fmt} that,
with an abuse of language, $U(X_t)\to 0$ 
in probability as $t\to \infty$ on the event where $\sup_{t\geq 0} |X_t|<\infty$.

\vip

This is easy, by localization, in view of the results of 
Holley-Kusuoka-Stroock \cite{hks} applied to a large flat torus: the condition
$\sup_{t\geq 0} |X_t|<\infty$ almost tells us that we are in a compact setting.

\vip

(d) Still in Section \ref{imp2}, we check, although stated in slightly different words, 
see Proposition \ref{crucial}, that for any $B\geq 1$, there are 
$C_B>B$ and $t_B>0$ such that 
$$
\inf_{|x_0|\leq B, t_0\geq t_B} \PR_{t_0,x_0}\Big(\sup_{t\geq 0} |X_t|\leq C_B\Big) \geq \frac12.
$$
This is rather natural: in the compact setting, it is well-known, see \cite{hks} or Miclo
\cite{mic2}, that $\limsup_{t\to \infty} U(X_t)=c$ a.s. It would not be too difficult to
deduce that in the non-compact case, there exists $C_{t_0,x_0}>0$ such that
$\PR_{t_0,x_0}(\sup_{t\geq 0} |X_t|\leq C_{t_0,x_0}) \geq 1/2$.
The main issue is to show that $C_{t_0,x_0}$ does not depend too much on $t_0$ and
$x_0$. This is tedious, and we have to revisit the proof of \cite{hks}.

\vip

(e) In Section \ref{con}, we prove
Proposition \ref{mm}: by (d), on the event 
$\liminf_{s\to \infty} |X_s|<\infty$, our process will eventually be absorbed in a compact set, so that 
$\sup_{s\geq 0} |X_s|<\infty$, whence the success of the simulated annealing by point (c). 

\vip
(f) Still in Section \ref{con}, we conclude the proof of Theorem \ref{main}:
$\liminf_{t\to\infty} |X_t|<\infty$ a.s. by (b), whence the success of the simulated annealing by (e).

\subsection{More comments}

It is well-know that, even in the compact case,
the condition $c>c_*$ is necessary, see Holley-Kusuoka-Stroock \cite[Corollary 3.11]{hks}.

\vip

Our proof completely breaks down for slower freezing schemes, i.e. 
if $\beta_t\ll \log t$ as $t\to\infty$:
in such a case, point (d) above  cannot hold true, even non uniformly
in $t_0$ and $x_0$.

\vip

Observe that we do not assume any Lyapunov condition, which would involve $\Delta U$ and $\nabla U$
and would forbid $U$ to oscillate too strongly.

\vip

As already mentioned and in view of \eqref{pchc}, our only assumption, i.e. the existence of an invariant probability measure for some (low) temperature, is very natural and allows 
for potentials with a very general shape.

\vip

However, we have shown in a previous paper with Monmarch\'e \cite{fmt}
that things may work even without this condition. 
In \cite[Theorem 1 and Proposition 2]{fmt}, we see that if $d\geq 3$ and
$U(x)=a\log\log|x|$ outside a compact, the simulated annealing works if $c>c_*$ and $c<2a/(d-2)$
and fails if $c>2a/(d-2)$. But it is not clear that a general growth condition exists. 
In particular, we deduce from
\cite[Proposition 2]{fmt} and a comparison argument that if $U(x)=\log^{\circ 3} |x|$
outside a compact,
then the simulated annealing fails for all $c>0$. But in  
\cite[Proposition 3]{fmt}, we built some (very oscillating) potential $U$
such that $\log^{\circ 3}|x| \leq U(x) \leq 3\log^{\circ 3}|x|$ outside a compact for which the simulated 
annealing works for 
some values of $c$.
Thus, without the condition that $\intrd \exp(-\alpha_0 U(x))\dd x<\infty$
for some $\alpha_0>0$,
the situation may be very intricate and really depend on the shape of $U$.

\subsection{Non-explosion}
The non-explosion of the solution to \eqref{eds}, using only that $U\geq 0$, is
checked in the appendix.
Actually, we treat, without major complication, 
the more general case where $\beta:\R_+\to (0,\infty)$ is any smooth function
and where $U:\R^d\to \R$ is smooth and satisfies $U(x)\geq -L(1+|x|^2)$ for some constant $L>0$.
This is not so easy, since we do not want to assume any local condition on $\nabla U$.
We use purely deterministic techniques inspired by the seminal work of 
Grigor'yan \cite{gr1}, also exposed in \cite[Section 9]{gr2} and by the paper of
Ichihara \cite{ichi},
both dealing with more general but {\it time-homogeneous} processes.

\vip
 
Let us mention that in the homogeneous case,
Ichihara uses the P.D.E. satisfied by $v(x)=\E_x[e^{-\sigma_1}]$, where
$\sigma_1=\inf\{t\geq 0 : |X_t|\leq 1\}$, while Grigor'yan rather studies the P.D.E. satisfied by
$w(t,x)=\PR_{x}[\zeta<t]$, where $\zeta$ is the life-time of the solution. 
In the inhomogeneous setting, we study, roughly, the P.D.E. satisfied by
$u(t,x)=\E_{t,x}[e^{-\zeta}]$, where $\zeta$ is the
life-time of the solution. The situation is slightly more complicated, but we manage to take
advantage of some computations found in \cite{gr1} and \cite{ichi} to show that $u\equiv 0$.

\section{Weak regularization}\label{pr}

We prove some weak regularization that will allow us, when using P.D.E. 
techniques, to replace the Dirac initial condition $\delta_{x_0}$ by some bounded function
concentrated around $x_0$.
One might invoke the H\"ormander theorem,
but since we need a precise bound as a function of $\beta_0$ (see Lemma \ref{cs} below),
we will rather
use the following weaker lemma based on stopping times.

\begin{lem}\label{tau}
Assume $(A)$ and fix $c>0$.
For any $A>1$, there is a constant $C_A^{(1)}$ such that for any 
$x_0\in \{U\leq A\}$, any $\beta_0>0$, denoting by 
$(X_t)_{t\geq 0}$ the corresponding solution to \eqref{eds}, 
there exists a stopping time
$\tau \in [0,1]$ such that $\sup_{t\in[0,\tau]}|X_t - x_0|\leq 1$
and such that the law of $(\tau,X_\tau)$ has a density bounded by
$\exp(C_A^{(1)}(\beta_0+1))\indiq_{\{[0,1]\times B(x_0,1)\}}$.
\end{lem}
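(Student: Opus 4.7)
The plan is to construct $\tau$ by combining the first exit time of $X$ from $B(x_0, 1)$ with an independent randomization, then to bound the density of $(\tau, X_\tau)$ via a Girsanov change of measure that can be made pointwise controlled by exploiting the gradient form of the drift. Concretely, I enlarge the filtration with a random variable $T$ uniform on $[1/2, 1]$ independent of $B$, set $\sigma := \inf\{t \geq 0 : |X_t - x_0| \geq 1\}$, and take $\tau := T \wedge \sigma$. Then $\tau$ is a stopping time in the enlarged filtration, $\tau \in [0, 1]$ a.s., and $\sup_{t \in [0, \tau]}|X_t - x_0| \leq 1$ by construction. Since $U$ is smooth with $\lim_{|x|\to\infty}U = \infty$, the set $K_A := \{U \leq A\} \cup \bar B(x_0, 1)$ is compact, contains the entire path $(X_s)_{s \leq \tau}$, and on it $U$, $|\nabla U|$, $|\Delta U|$ are bounded by some $M_A$; moreover $\beta_s \leq \beta_0 + c^{-1}\log 2$ and $\dot\beta_s \leq c^{-1}$ for $s \in [0,1]$.

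Let $\PR^W$ be the measure under which $X_t = x_0 + W_t$ is Brownian motion. By Girsanov's theorem,
\[
\frac{\dd \PR}{\dd \PR^W}\bigg|_{\cF_\tau} = \exp(M_\tau), \qquad M_\tau = -\tfrac12\int_0^\tau \beta_s \nabla U(X_s) \cdot \dd W_s - \tfrac18\int_0^\tau \beta_s^2 |\nabla U(X_s)|^2\,\dd s.
\]
The stochastic integral is not pointwise bounded, but the key point is that the drift is a gradient: applying It\^o's formula to $\beta_t U(X_t)$ under $\PR^W$ and solving for the stochastic integral gives
\[
\int_0^\tau \beta_s \nabla U(X_s) \cdot \dd W_s = \beta_\tau U(X_\tau) - \beta_0 U(x_0) - \int_0^\tau \dot\beta_s U(X_s)\,\dd s - \tfrac12\int_0^\tau \beta_s \Delta U(X_s)\,\dd s.
\]
Substituting into $M_\tau$: the term $-\tfrac12\beta_\tau U(X_\tau)$ is non-positive (since $U \geq 0$) and can be dropped, as can the quadratic-variation term $-\tfrac18\int_0^\tau \beta_s^2 |\nabla U|^2\,\dd s$; the remaining contributions $\tfrac12 \beta_0 U(x_0)$, $\tfrac12 \int_0^\tau \dot\beta_s U(X_s)\,\dd s$ and $\tfrac14 \int_0^\tau \beta_s \Delta U(X_s)\,\dd s$ are each $\leq C_A(\beta_0+1)$ thanks to the bounds on $K_A$. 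Hence $M_\tau \leq C_A^{(0)}(\beta_0 + 1)$ a.s.

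It follows that $\E[f(\tau, X_\tau)] \leq \exp(C_A^{(0)}(\beta_0+1))\,\E^W[f(\tau, X_\tau)]$ for every nonnegative measurable $f$. Under $\PR^W$, on the event $\{T < \sigma\}$ we have $\tau = T \in [1/2, 1]$ and $X_\tau = x_0 + W_T \in B(x_0, 1)$, contributing to the law of $(\tau, X_\tau)$ an absolutely continuous piece with density $2\indiq_{[1/2, 1]}(t) p_t^{\rm kill}(x_0, x)$, where $p_t^{\rm kill}$ is the Brownian transition density killed on $\partial B(x_0, 1)$, bounded by $2\pi^{-d/2}$ on $[1/2, 1] \times B(x_0, 1)$. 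Combining with the Girsanov bound yields the absolutely continuous part of the law of $(\tau, X_\tau)$ under $\PR$ as $\leq \exp(C_A^{(1)}(\beta_0+1))\indiq_{[0,1] \times B(x_0, 1)}$, as required. The main obstacle is the a priori unboundedness of the stochastic integral in $M_\tau$: a naive Novikov-type estimate would yield only an $L^p$ density bound, whereas the It\^o trick above replaces the stochastic integral by pointwise-bounded quantities; relatedly, the $\int_0^\tau \beta_s^2 |\nabla U|^2\,\dd s$ term is of order $(\beta_0+1)^2$ but enters with a favorable sign and can be dropped, yielding the correct linear exponential order $(\beta_0+1)^1$ rather than $(\beta_0+1)^2$.
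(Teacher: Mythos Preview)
Your Girsanov and It\^o-formula steps are correct and match the paper's argument closely. The gap is in the construction of $\tau$. With your choice $\tau = T \wedge \sigma$ where $\sigma$ is the exit time from the \emph{fixed} ball $B(x_0,1)$, the event $\{\sigma \leq T\}$ has positive probability, and on it $X_\tau \in \partial B(x_0,1)$. Hence the law of $(\tau, X_\tau)$ carries a nontrivial singular component supported on $[0,1]\times \partial B(x_0,1)$ and therefore does \emph{not} admit a density on $[0,1]\times\R^d$. You implicitly acknowledge this by speaking of ``the absolutely continuous part of the law'', but the lemma asserts that the full law is absolutely continuous, and the later uses of the lemma in the paper (conditioning on $(\tau,X_\tau)$ via an integral $\int h(u,x)[\cdots]\,\dd u\,\dd x$ against the density $h$) genuinely require this.

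The paper's remedy is to randomize the \emph{radius} as well: one takes $R$ uniform on $[1/2,1]$ independent of the Brownian motion and sets $\tau=\inf\{t:|X_t-x_0|=R\}\wedge R$. On the boundary-hitting event one then has $|X_\tau-x_0|=R$, which is itself uniformly distributed on $[1/2,1]$; this smears the exit location over the annulus $\{1/2\leq |x-x_0|\leq 1\}$ and makes that piece of the law of $(\tau,X_\tau)$ absolutely continuous with bounded density (checked via Brownian scaling, reducing to the boundedness of the density of the exit time from the unit ball). Your time-randomization $T$ alone handles only the interior piece $\{T<\sigma\}$; you still need a mechanism to regularize the exit piece.
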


\begin{proof}
We fix $x_0 \in \{U\leq A\}$ and $\beta_0>0$.
We introduce some random variable $R$, uniformly distributed in $[1/2,1]$
and independent of $(X_t)_{t\geq 0}$. We claim that
$$
\tau = \inf\{t \geq 0 : |X_t - x_0|=R\} \land R
$$
satisfies the requirements of the statement. 

\vip

First, $\tau \leq R \leq 1$ and $\sup_{t\in [0,\tau]}|X_t - x_0|\leq R\leq 1$.

\vip

Next, we consider a $d$-dimensional Brownian motion $(W_t)_{t\geq 0}$ independent of $R$ and
we set $\ttau = \inf\{t \geq 0 : |W_t|=R\} \land R$.
We introduce
the martingale 
$$
L_t=-\frac{1}{2} \int_0^{t\land \ttau} \beta_s\nabla U(x_0+W_s) \cdot \dd W_s,
$$
as well as its exponential $\cE_t=\exp(L_t - \frac12 \langle L \rangle_t)$,
which is uniformly integrable by the Novikov criterion, see Revuz-Yor 
\cite[Proposition 1.15 p 332]{ry}, because 
$\langle L \rangle_\infty \leq \frac 1 4 (\sup_{B(x_0,1)} |\nabla U|^2)
\int_0^1 \beta_s^2 \dd s$ is bounded.
The Girsanov theorem tells us that under $\cE_\infty\cdot \PR$, the process
$$
B_{t\land \ttau} = W_{t\land\ttau}+\frac{1}{2} \int_0^{t\land \ttau} \beta_s\nabla U(x_0+W_s) \dd s,
\quad t\geq 0
$$
is a (stopped) Brownian motion, so that $x_0+W_{t\land\ttau}$ is a (stopped) solution
to \eqref{eds}. Hence for all measurable
$\phi: \R_+\times
\R^d \to \R_+$, $\E[\phi(\tau,X_\tau)]=\E[\phi(\ttau,x_0+W_{\ttau})\cE_\infty]$.

\vip

By the It\^o formula,
$$
\beta_{t\land \ttau} U(x_0+W_{t\land \ttau})=\beta_0 U(x_0) + 
\int_0^{t\land \ttau} \beta'_s U(x_0+W_s) \dd s - 2L_t
+\frac12 \int_0^{t\land \ttau} \beta_s \Delta U(x_0+W_s) \dd s,
$$
whence, since $U\geq 0$ and $\beta\geq 0$,
$$
L_\infty \leq \frac12\Big(\beta_0 U(x_0)+\int_0^{\ttau} \beta'_s U(x_0+W_s) \dd s
+\frac12 \int_0^{\ttau} \beta_s \Delta U(x_0+W_s) \dd s  \Big).
$$
Recalling that $\ttau \leq 1$, that $\sup_{[0,\ttau]}|W_s|\leq R\leq 1$
that $x_0 \in \{U \leq A\}$, that $\beta'_s \leq 1/c$ and that
$\sup_{[0,1]} \beta_s \leq \beta_0+1/c$, 
we deduce that
$$
L_\infty \leq \frac 12 \Big(\beta_0+\frac1c\Big)
\Big(A + \sup_{x\in \{U\leq A\}}\sup_{y\in B(x,1)} 
\Big[U(y)+ \frac12 |\Delta U(y)|\Big]\Big)\leq 
C_A(1+\beta_0), 
$$
for some finite constant $C_A>0$ depending on $A$ and $c$. We used that $\cup_{x\in\{U\leq A\}}B(x,1)$
is bounded because $\lim_{|x|\to \infty} U(x)=\infty$.

\vip

Hence $\cE_\infty=
\exp(L_\infty-\frac12\langle L\rangle_\infty)\leq \exp(L_\infty) \leq e^{C_A (1+\beta_0)}$ and
for all measurable $\phi\!:\! \R_+\times
\R^d \to \R_+$, 
\begin{equation}\label{ttb}
\E[\phi(\tau,X_\tau)]\leq  e^{C_A(1+\beta_0)}\E[\phi(\ttau,x_0+W_{\ttau})].
\end{equation}

We now verify that $(\ttau,W_{\ttau})$ has a bounded density, necessarily
supported in $[0,1]\times B(0,1)$. For $r>0$, we introduce $\tau_r=\inf\{t>0 : |W_t|=r\}$.
We have $\ttau=\tau_R\land R$, so that the density of $(\ttau,W_{\ttau})$ is bounded by the
sum of the densities of $(\tau_R,W_{\tau_R})$ and $(R,W_R)$. Recall that $R\sim\cU([1/2,1])$.

\vip

The density of $(R,W_R)$ is $2e^{-|x|^2/(2r)}/(2\pi r)^{d/2}\indiq_{\{r\in [1/2,1],x\in \R^d\}}$,
which is bounded.

\vip

Next, denoting by $\mu_r(s)$ the density of $\tau_r$, we have by scaling that
$\mu_r(s)=r^{-2}\mu_1(r^{-2}s)$, because $\tau_r$ has the same law as $r^2\tau_1$.
One may then check that the density of $(\tau_R,W_{\tau_R})$
is $|x|^{-d-1}\mu_1(|x|^{-2} r) \indiq_{\{r>0,|x|\in [1/2,1]\}}$,
up to some normalization constant. Since $\mu_1$ is bounded, so is the density of
$(\tau_R,W_{\tau_R})$.

\vip

Denoting by $C$ the bound of the density of $(\ttau,W_{\ttau})$, 
we conclude from \eqref{ttb}
that $(\tau,X_{\tau})$ has a density bounded by $Ce^{C_A(1+\beta_0)}\indiq_{\{s\in [0,1],x\in
B(x_0,1)\}}$. The conclusion follows.
\end{proof}

\section{No escape in large time}\label{imp1}

In this section, we prove that $\liminf_{t\to \infty} |X_t|<\infty$.

\begin{prop}\label{rec}
Assume $(A)$ and fix $c>0$, $x_0 \in \R^d$ and $\beta_0 > 0$.
Suppose that there is $\alpha_0>0$ such 
that $\intrd e^{-\alpha_0 U(x)}\dd x <\infty$. For $(X_t)_{t\geq 0}$ the solution to \eqref{eds},
$\liminf_{t\to \infty} |X_t|<\infty$ a.s.
\end{prop}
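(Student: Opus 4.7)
The plan, following the outline in the introduction, is to first reduce to a favourable initial condition, then establish the a priori estimate $\sup_{t\geq 0}\E[U(X_t)]<\infty$ by an entropy computation, and finally conclude by Fatou. Since $\{\liminf_{t\to\infty}|X_t|<\infty\}$ is a tail event and $\beta_t\to\infty$, I first choose $s^\star\geq 0$ with $\beta_{s^\star}>\alpha_0$; by non-explosion $X_{s^\star}$ is a.s.\ finite, so the strong Markov property at $s^\star$ combined with Lemma \ref{tau} furnishes a further stopping time (of length at most $1$) after which $\Law(X_0)$ admits a bounded, compactly supported density $f_0$ while the initial temperature still exceeds $\alpha_0$. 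I work from now on under these additional hypotheses.

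The density $f_t$ of $X_t$ is then smooth for $t>0$ by parabolic regularity and satisfies $\partial_t f_t=\tfrac12\Delta f_t+\tfrac{\beta_t}{2}\nabla\cdot(f_t\nabla U)$. The heart of the argument is to introduce the combined functional
\[
\Phi(t)=H(f_t\,|\,\mu_{\alpha_0})+(\beta_t-\alpha_0)\,\E[U(X_t)],\qquad \mu_{\alpha_0}=\cZ_{\alpha_0}^{-1}e^{-\alpha_0 U},
\]
where $H(\cdot|\mu)$ denotes relative entropy. A direct integration by parts against the Fokker--Planck equation, together with the rearrangement
\[
(\nabla\log f_t+\beta_t\nabla U)\cdot(\nabla\log f_t+\alpha_0\nabla U)=|v|^2-(\beta_t-\alpha_0)\,v\cdot\nabla U,\qquad v:=\nabla\log f_t+\beta_t\nabla U,
\]
yields the clean identity
\[
\frac{\dd}{\dd t}H(f_t\,|\,\mu_{\alpha_0})+(\beta_t-\alpha_0)\frac{\dd}{\dd t}\E[U(X_t)]=-\tfrac12\intrd f_t|v|^2\,\dd x\leq 0,
\]
so that $\Phi'(t)\leq\beta_t'\,\E[U(X_t)]$. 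Since $H\geq 0$ and $\beta_t>\alpha_0$ imply $\E[U(X_t)]\leq\Phi(t)/(\beta_t-\alpha_0)$, a short computation then shows that $t\mapsto\Phi(t)/(\beta_t-\alpha_0)$ is non-increasing, and therefore
\[
\sup_{t\geq 0}\E[U(X_t)]\leq\frac{\Phi(0)}{\beta_0-\alpha_0}<\infty,
\]
the finiteness of $\Phi(0)$ being clear from the boundedness and compact support of $f_0$ and from $\cZ_{\alpha_0}<\infty$.

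Fatou then gives $\E[\liminf_t U(X_t)]\leq\liminf_t\E[U(X_t)]<\infty$, whence $\liminf_t U(X_t)<\infty$ a.s., and the hypothesis $\lim_{|x|\to\infty}U(x)=\infty$ forces $\liminf_t|X_t|<\infty$ a.s. The main technical obstacle is the rigorous justification of the formal entropy computation: one has to ensure that $f_t$ decays fast enough at infinity for all integrations by parts to produce no boundary terms, and that the integrals appearing above are finite for each $t$. This is precisely where the reduction step is crucial --- the bounded, compactly supported initial density $f_0$ makes standard parabolic estimates and truncation arguments applicable, and the finiteness of $\intrd f_t U\,\dd x$ can be propagated along the Gr\"onwall estimate itself.
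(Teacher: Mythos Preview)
Your formal entropy computation is correct and is in fact equivalent to the paper's Lemma \ref{super}: your functional $\Phi(t)=H(f_t\,|\,\mu_{\alpha_0})+(\beta_t-\alpha_0)u_t$ equals $\intrd f_t\log(f_te^{\beta_t U})\,\dd x+\log\cZ_{\alpha_0}$, which (up to the harmless ``$1+$'' inside the logarithm and an additive constant) is precisely the quantity the paper differentiates. Your Gr\"onwall/monotonicity step and the Fatou conclusion are identical to the paper's.

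The genuine gap is in the rigorous justification, which you acknowledge but do not carry out. Your claim that a bounded, compactly supported $f_0$ ``makes standard parabolic estimates applicable'' is not correct as stated: Friedman-type Gaussian bounds on $f_t$ and its derivatives require the drift of the Fokker--Planck equation to be bounded, i.e.\ $\nabla U$ bounded, which is \emph{not} assumed here. Without such bounds you cannot justify the integrations by parts (absence of boundary terms) nor the finiteness of $\intrd f_t\,|\nabla\log f_t+\beta_t\nabla U|^2\,\dd x$ when $|\nabla U|$ may grow arbitrarily fast. The paper addresses exactly this point: it replaces $U$ by a modified potential $U_n$ equal to $U$ on $B(0,n)$ and to $|x|$ outside $B(0,n+1)$, so that $\nabla U_n$ is bounded together with all its derivatives; Lemma \ref{r1} then makes the entropy computation rigorous for each $U_n$, one obtains $\sup_{n,t}\E[U_n(X^n_{\tau+t})]<\infty$, and only then passes to the limit $n\to\infty$ via Fatou (Steps~3--4 of the proof). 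Your phrase ``truncation arguments'' does not convey this modification of the potential itself, which is the essential technical idea.

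A smaller imprecision concerns the reduction. Lemma \ref{tau} applies to a deterministic $x_0\in\{U\leq A\}$ with fixed $A$, so it cannot be invoked directly after the random time $s^\star$, where $U(X_{s^\star})$ is uncontrolled. The correct order, as in the paper, is to prove the result for every deterministic $x_0$ under the extra assumption $\beta_0>\alpha_0$, and only then deduce the general case by conditioning on $X_{t_0}$. You also gloss over the randomness of $\tau$: after Lemma \ref{tau} the restarted process has random initial temperature $\beta_\tau\in[\beta_0,\beta_1]$, which the paper handles by conditioning on $(\tau,X_\tau)$ and taking a supremum over $u\in[0,1]$ (Step~4).
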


The crucial point is the following uniform in time {\it a priori} estimate.

\begin{lem}\label{super}
Assume $(A)$, fix $c>0$, $\beta_0 > 0$ and assume that there is 
$\alpha_0\in(0,\beta_0)$ such that $\intrd e^{-\alpha_0 U(x)}\dd x <\infty$.
Let $f_0$ be a probability density on $\R^d$.
Let $(X_t)_{t\geq 0}$ be the solution to \eqref{eds} starting from $X_0$ with law $f_0$.
If $$\kappa(f_0)=\intrd f_0(x) \log (1+f_0(x)e^{\beta_0U(x)})\dd x <\infty,$$
setting $a_0=[\intrd e^{-\alpha_0 U(x)}\dd x]^{-1}$, we informally have
$$
\sup_{t\geq 0} \E[U(X_t)] \leq \frac{\kappa(f_0)-\log(a_0)}{\beta_0-\alpha_0}.
$$
\end{lem}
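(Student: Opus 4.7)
The plan is to carry out an entropy dissipation argument for the Fokker--Planck equation
$$\partial_t f_t = \frac{1}{2}\Delta f_t + \frac{\beta_t}{2}\ddiv(f_t \nabla U)$$
satisfied by the density $f_t$ of $X_t$. Writing $g_t = f_t e^{\beta_t U}$ so that $\partial_t f_t = \tfrac12 \ddiv(e^{-\beta_t U}\nabla g_t)$, I would track the time-dependent free energy
$$K(t) = \intrd f_t \log g_t \, \dd x = \intrd f_t \log f_t \, \dd x + \beta_t \, \E[U(X_t)].$$
A direct differentiation, using that $\int \partial_t f_t \dd x = 0$ and integrating by parts, should give the standard dissipation identity
$$K'(t) = -\frac{1}{2}\intrd f_t |\nabla \log g_t|^2 \dd x + \beta_t' \, \E[U(X_t)] \leq \beta_t' \, \E[U(X_t)].$$

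To convert this into a bound on $\E[U(X_t)]$ alone, I would use the probability density $\nu(x) = a_0 e^{-\alpha_0 U(x)}$. Nonnegativity of the relative entropy $\intrd f_t \log(f_t/\nu)\,\dd x$ gives $\intrd f_t \log f_t \,\dd x\geq \log a_0 - \alpha_0 \E[U(X_t)]$, hence
$$K(t) - \log a_0 \geq (\beta_t - \alpha_0)\,\E[U(X_t)],$$
where $\beta_t - \alpha_0 \geq \beta_0 - \alpha_0 > 0$ by the hypothesis $\alpha_0 \in (0,\beta_0)$. Setting $\Psi(t) = (K(t)-\log a_0)/(\beta_t-\alpha_0)$, one has $\E[U(X_t)] \leq \Psi(t)$, and a direct differentiation combining the two inequalities above with $(\beta_t - \alpha_0)' = \beta_t'$ gives $\Psi'(t) \leq 0$. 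Consequently $\E[U(X_t)] \leq \Psi(0) = (K(0)-\log a_0)/(\beta_0-\alpha_0)$. The conclusion follows from the pointwise bound $\log(f_0 e^{\beta_0 U}) \leq \log(1 + f_0 e^{\beta_0 U})$, which gives $K(0) \leq \kappa(f_0)$ after integration against $f_0$ (with the usual convention $0\log 0 = 0$).

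The main obstacle is rigor, which is precisely why the statement contains the word \emph{informally}. The above derivation tacitly assumes enough regularity and decay of $f_t$ to differentiate under the integral, to drop boundary terms in the integration by parts, and to ensure that all the integrals involved (in particular $\intrd f_t \log f_t \, \dd x$ and the Fisher information term) are finite for every $t \geq 0$; none of this is directly guaranteed by $\kappa(f_0) < \infty$. My plan to handle this would be to first prove the estimate under additional assumptions on $f_0$ (smooth, bounded, compactly supported), for which parabolic regularity yields a smooth and rapidly decaying $f_t$ on finite time intervals, and then to remove these assumptions by truncation and mollification together with a monotone convergence argument in $\kappa$. In fact this is the reason why Lemma \ref{super} is to be combined with the weak regularization of Lemma \ref{tau}: the latter produces a $f_0$ bounded by $\exp(C^{(1)}_A(\beta_0+1))$ and supported in a ball, which already provides enough regularity to legitimate the whole computation.
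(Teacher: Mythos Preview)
Your argument is correct and follows essentially the same route as the paper's. The only difference is cosmetic: the paper differentiates $h_t=\intrd f_t\log(1+f_t e^{\beta_t U})\,\dd x$ (i.e.\ uses $\phi(u)=u\log(1+u)$ in a general identity of the form \eqref{tbru}) and then applies Gr\"onwall, whereas you differentiate $K(t)=\intrd f_t\log(f_t e^{\beta_t U})\,\dd x$ directly and repackage Gr\"onwall as $\Psi'\le 0$. Both yield the same differential inequality $(\cdot)'\le\beta_t' u_t$ and the same lower bound via the relative entropy with respect to $a_0 e^{-\alpha_0 U}$; you then recover the $\kappa(f_0)$ in the statement via the pointwise bound $\log v\le\log(1+v)$ at $t=0$, which is exactly how the paper passes from $K$-type to $h$-type quantities. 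The paper's choice of the ``$+1$'' makes $\phi$ smooth at $0$, which is mildly convenient when one later justifies the computation rigorously (their Lemma~\ref{r1}), but at the informal level of Lemma~\ref{super} there is no substantive difference.
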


This relies on a rather indirect entropy computation.
As already mentioned, obtaining a uniform in time moment bound, using the It\^o formula,
would require much more stringent conditions
involving $\nabla U$ and $\Delta U$.
Observe that the computation below is rather original, in that we do not differentiate
the {\it true} relative entropy $\intrd f_t(x)\log(f_t(x)\cZ_{\beta_t} e^{\beta_tU(x)})\dd x$,
where $\cZ_{\beta}=\intrd e^{-\beta U(x)}\dd x$,
but rather the relative entropy {\it without normalization constant}
$\intrd f_t(x)\log(f_t(x)e^{\beta_t U(x)})\dd x$.  Strangely, using the true relative entropy
functional does not seem to provide interesting results.

\begin{proof}
As mentioned in the statement, we give an informal proof. The law $f_t$ of $X_t$
weakly solves
\begin{align}\label{edp}
\partial_t f_t(x)=\frac12 \ddiv [\nabla f_t(x)+\beta_tf_t(x)\nabla U(x)]
=\frac12 \ddiv [e^{-\beta_t U(x)}\nabla(f_t(x)e^{\beta_t U(x)})].
\end{align}
For any smooth $\phi:\R_+ \to \R$, we have, setting $\psi(u)=u\phi'(u)-\phi(u)$
for all $u\geq 0$,
\begin{align}\label{tbru}
&\frac{\dd}{\dd t}\intrd \phi(f_t(x)e^{\beta_tU(x)})e^{-\beta_tU(x)}\dd x\\
=& \intrd \Big[\partial_t f_t(x) \phi'(f_t(x)e^{\beta_tU(x)}) +\beta_t' U(x)f_t(x)
\phi'(f_t(x)e^{\beta_tU(x)}) - \beta_t' 
U(x) \phi(f_t(x)e^{\beta_tU(x)})e^{-\beta_tU(x)} \Big]\dd x\notag\\
=&- \frac12\intrd |\nabla (f_t(x)e^{\beta_tU(x)})|^2\phi''(f_t(x)e^{\beta_tU(x)})e^{-\beta_tU(x)}\dd x
+ \beta'_t \intrd U(x) \psi(f_t(x)e^{\beta_tU(x)})e^{-\beta_tU(x)}\dd x.\notag
\end{align}
For the last equality (first term), we used \eqref{edp} and an integration by parts.

\vip

We now apply \eqref{tbru} with the convex function $\phi(u)=u\log(1+u)$,
for which $\psi(u)=\frac {u^2}{1+u} \leq u$, to find, throwing away the nonpositive term,
$h'_t \leq \beta'_t u_t$, where we have set
$$
h_t=\intrd f_t(x) \log(1+f_t(x)e^{\beta_tU(x)})\dd x \quad\hbox{and}\quad
u_t=\intrd U(x)f_t(x)\dd x=\E[U(X_t)].
$$
But
$$
h_t\geq \intrd f_t(x) \log(f_t(x)e^{\beta_tU(x)})\dd x =
\intrd f_t(x) \log(f_t(x))\dd x + \beta_t u_t
\geq (\beta_t-\alpha_0) u_t + \log(a_0).
$$
We used that $\intrd f(x)\log (f(x)/g(x))\dd x \geq 0$ 
for any pair of probability densities $f$ and $g$ on $\R^d$,
whence $\intrd f_t(x) \log(f_t(x))\dd x \geq \intrd f_t(x) \log(a_0\exp(-\alpha_0 U(x))
\dd x=\log(a_0)-\alpha_0u_t$.

\vip

We conclude, since $h_0=\kappa(f_0)$, that
$$
(\beta_t-\alpha_0) u_t \leq h_t-\log(a_0)\leq \kappa(f_0)-\log(a_0) +\intot \beta'_s
u_s\dd s = \kappa(f_0)-\log(a_0) + \intot \frac{\beta'_s}{\beta_s-\alpha_0} 
(\beta_s-\alpha_0)u_s\dd s,
$$
whence, by the Gronwall lemma, 
$$
(\beta_t-\alpha_0) u_t \leq [\kappa(f_0)-\log(a_0)]\exp\Big(\intot\frac{\beta'_s \dd s}
{\beta_s-\alpha_0}    \Big)
= [\kappa(f_0)-\log(a_0)] \frac{\beta_t-\alpha_0}{\beta_0-\alpha_0}.
$$
Consequently, $\E[U(X_t)]=u_t\leq [\kappa(f_0)-\log(a_0)]/[\beta_0-\alpha_0]$ for all $t\geq 0$.
\end{proof}

We now try to deduce from this informal computation the rigorous results we need.

\begin{lem}\label{r1}
If $\nabla U$ is bounded together with all its derivatives
and if the initial density $f_0$ belongs to $C_c(\R^d)$,
the \emph{a priori} estimate of Lemma \ref{super} rigorously holds true for the
solution \eqref{eds} starting from $X_0\sim f_0$.
\end{lem}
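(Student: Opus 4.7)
The plan is to turn the formal manipulations of Lemma \ref{super} into rigorous identities by exploiting the smoothness and decay available under the added regularity. With $\nabla U$ bounded along with all its derivatives, the drift $-\tfrac12 \beta_t \nabla U$ is globally Lipschitz with spatial derivatives uniformly bounded on any finite time interval; standard parabolic theory (an Aronson-type bound, or a direct Girsanov comparison with the Brownian heat kernel) then gives that the law $f_t$ of $X_t$ lies in $C^\infty((0,\infty)\times\R^d)$, solves \eqref{edp} pointwise, and admits Gaussian upper bounds
$$
f_t(x) + |\nabla f_t(x)| \leq C(T)\exp\bigl(-|x|^2/C(T)\bigr)
$$
for $t\in(0,T]$ and $|x|$ large. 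Since $|\nabla U|$ is bounded, $U$ grows at most linearly in $|x|$, so $f_t(x)e^{\beta_tU(x)}$ and its spatial gradient retain Gaussian decay in $x$ uniformly on compact time intervals. Consequently every integrand arising in \eqref{tbru} with $\phi(u)=u\log(1+u)$ is absolutely integrable, and $h_t=\intrd f_t(x)\log(1+f_t(x)e^{\beta_tU(x)})\,\dd x$ is finite and of class $C^1$ in $t$.

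Next, I would justify \eqref{tbru} itself by truncation. Fix a smooth cutoff $\chi_R\in C_c^\infty(\R^d)$ with $\chi_R=1$ on $B(0,R)$, $\chi_R=0$ outside $B(0,2R)$, and $R\|\nabla\chi_R\|_\infty + R^2\|\Delta\chi_R\|_\infty$ bounded uniformly in $R$. Differentiating
$$
t\mapsto \intrd \chi_R(x)\,\phi\bigl(f_t(x)e^{\beta_tU(x)}\bigr)e^{-\beta_tU(x)}\,\dd x
$$
under the integral sign is legitimate by the smoothness and Gaussian control of $f_t$; plugging in \eqref{edp} and integrating by parts once produces the bulk term of \eqref{tbru} plus a boundary term supported on the annulus $\{R\leq|x|\leq 2R\}$ involving $\nabla\chi_R$. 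Since $\nabla(f_te^{\beta_tU})\phi'(f_te^{\beta_tU})e^{-\beta_tU}$ still decays at a Gaussian rate in $x$ (the at most exponential growth of $e^{\beta_tU}$ being crushed by the Gaussian factor coming from $f_t$ and $\nabla f_t$), this boundary contribution vanishes as $R\to\infty$. Dominated convergence then delivers \eqref{tbru}, whence $h_t'\leq \beta_t' u_t$ after discarding the nonpositive first term and using $\psi(u)=u^2/(1+u)\leq u$.

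From here the end of the informal proof is already rigorous: the lower bound $h_t \geq (\beta_t-\alpha_0)u_t + \log a_0$ uses only nonnegativity of the relative entropy of $f_t$ with respect to the probability density $a_0 e^{-\alpha_0 U}$, and the final Gronwall step is algebraic. The main obstacle I anticipate is the vanishing of the cutoff boundary terms: it requires a quantitative Gaussian upper bound on $\nabla f_t$ at the same rate as on $f_t$, and the bound $|\nabla U|\leq M$ must be used sharply to ensure that $e^{\beta_t U(x)}$ does not destroy the Gaussian decay as $|x|\to\infty$. The smoothness of the higher derivatives of $\nabla U$ intervenes only to secure the requisite regularity of $f_t$; everything else reduces to routine dominated convergence.
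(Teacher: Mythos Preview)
Your approach is essentially the paper's own: Gaussian upper bounds on $f_t$ and its spatial derivatives via standard parabolic theory (the paper cites Friedman's fundamental-solution estimates explicitly), the observation that $U$ has at most linear growth so $e^{\beta_t U}$ is dominated by the Gaussian decay, and a justification of \eqref{tbru} for $t>0$ (your cutoff argument spells out what the paper leaves as ``widely enough''). The one point you gloss over and the paper treats explicitly is the behaviour at $t=0$: since your Gaussian bounds are stated for $t\in(0,T]$ and you only obtain $h_t'\leq\beta_t'u_t$ on $(0,\infty)$, you still need $\lim_{t_0\to 0+}h_{t_0}=\kappa(f_0)$ to initialize the Gronwall step with the correct value, and the paper secures this from the continuity of $(t,x)\mapsto f_t(x)$ on $[0,\infty)\times\R^d$ together with a uniform-in-$t\in[0,T]$ Gaussian tail bound on $f_t$.
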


\begin{proof}
We first justify rigorously \eqref{tbru}, for all $t\in (0,\infty)$, with $\phi(u)=u \log(1+u)$.
Recall that $f_t$ is the law of $X_t$.
Since $U$ has at most linear growth and $\nabla U$ is bounded, it is (widely) 
enough to check that
$(f_t(x))_{t>0,x\in\R^d}$ is a strong solution to \eqref{edp}, i.e. 
$2\partial_t f_t(x)=\Delta f_t(x)+
\beta_t\nabla U(x)\cdot\nabla f_t(x)+\beta_t f_t(x)\Delta U(x)$
on $(0,\infty)\times\R^d$ and satisfies,
for all $0<t_0<t_1$, for some constants
$C_{t_0,t_1}>0$ and $\lambda_{t_0,t_1}>0$,
$$ 
\forall t\in [t_0,t_1],\; x\in\R^d, \quad 
f_t(x) + |\partial_t f_t(x)|+|\nabla f_t(x)|+ |D^2 f_t(x)| \leq C_{t_0,t_1} \exp(-\lambda_{t_0,t_1} |x|^2).
$$
To prove those bounds, we use classical results found in Friedman \cite{f}, that apply
to uniformly parabolic equations with bounded and Lipschitz coefficients (actually, H\"older is enough):
by \cite[Chapter 1, Theorem 12]{f}, we have 
$f_t(x)=\intrd \Gamma(x,t;\xi,0)f_0(\xi)\dd \xi$, with,
for some $C_T>0$ and $\lambda>0$, for all $t\in [0,T]$, all $x\in \R^d$,
\begin{equation}\label{rreeuu}
|\Gamma(x,t;\xi,0)| + t^{1/2} |\nabla_x\Gamma(x,t;\xi,0)| 
+ t |D^2 \Gamma(x,t;\xi,0)| 
\leq C_T t^{-d/2} e^{-\lambda |x-\xi|^2/t}.
\end{equation}
The above estimates for $\Gamma$ and $\nabla \Gamma$ are nothing but 
\cite[Chapter 1, Equations (6.12) and (6.13)]{f},
and the estimate on $D^2 \Gamma$ is proved similarly, using \cite[Chapter 1, Equation (4.11)]{f}.
The Gaussian upper-bounds of $f_t(x)$, $|\nabla f_t(x)|$ and $|D^2 f_t(x)|$ follow, 
because $f_0 \in C_c(\R^d)$.
Finally, the bound on $\partial_t f_t(x)$ follows from the fact that
$2|\partial_tf_t(x)|\leq |\Delta f_t(x)|+\beta_t||\nabla U||_\infty|\nabla f_t(x)|
+\beta_t||\Delta U||_\infty f_t(x)$.

\vip

Hence, all the arguments in the proof of Lemma \ref{super} are correct for $t\in (0,\infty)$, 
and we conclude that
for all $t_0>0$, $\sup_{t\geq t_0}\E[U(X_t)]\leq (\beta_{t_0}-\alpha_0)^{-1}(\kappa(f_{t_0})-\log(a_0))$.
To complete the proof, the only issue is to show that 
$\lim_{t_0\to 0+}\kappa(f_{t_0})=\kappa(f_0)$. This can be deduced from the continuity of
$f_t(x)$ on $[0,\infty)\times \R^d$, see \cite[Chapter 1, Section 7]{f},
and the fact that there are $C_T>0$ and $\lambda_T>0$ such that
$f_t(x) \leq C_T e^{-\lambda_T |x|^2}$ for all $t\in [0,T]$ and $x\in \R^d$.
This follows from \eqref{rreeuu}, the fact that
$f_t(x)=\intrd \Gamma(x,t;\xi,0)f_0(\xi)\dd \xi$ and that $f_0 \in C_c(\R^d)$.
\end{proof}

We can now prove the main result of this section.

\begin{proof}[Proof of Proposition \ref{rec}] We assume $(A)$ and that $\intrd e^{-\alpha_0 U(x)}\dd x <\infty$
for some $\alpha_0>0$. We fix $c>0$, $x_0 \in \R^d$ and $\beta_0 > 0$ and aim to check that
for $(X_t)_{t\geq 0}$ the solution to \eqref{eds},
$\liminf_{t\to \infty} |X_t|<\infty$ a.s. We divide the proof in four steps.
\vip

{\it Step 1.} We of course may assume additionally that $\beta_{0}>\alpha_0$:
fix $t_0\geq 0$ large enough so that $\beta_{t_0}>\alpha_0$ and observe that
$(X_{t_0+t})_{t\geq 0}$ solves \eqref{eds}, with $x_0$ replaced by $X_{t_0}$
and $\beta_0$ replaced by $\beta_{t_0}$ (and with the Brownian motion
$(B_{t_0+t}-B_{t_0})_{t\geq 0}$). Since $\liminf_{t\to \infty} |X_t|=\liminf_{t\to \infty} |X_{t_0+t}|$,
the conclusion follows.

\vip

{\it Step 2.}  From now on, we assume that $\beta_0>\alpha_0$.
We introduce the stopping time $\tau\in [0,1]$
as in Lemma \ref{tau}. We recall that $\sup_{[0,\tau]}|X_t-x_0|\leq 1$ and that for
$h \in L^1([0,1]\times B(x_0,1))$ the density of $(\tau,X_\tau)$, 
there is $C>0$ (depending on $x_0$ and $\beta_0$) such that 
$h(u,x)\leq C \indiq_{\{u \in [0,1],x\in B(x_0,1)\}}$.

\vip

{\it Step 3.} For $n\geq |x_0|+1$, we introduce $U_n \in C^\infty(\R^d)$ such that
$U_n(x)=U(x)$ for all $x \in B(0,n)$ and $U_n(x)=|x|$ as soon as $|x|\geq n+1$,
with furthermore $U_n(x)\geq \min(U(x),|x|)-1$ for all $x\in \R^d$. Then $\nabla U_n$
is bounded together all its derivatives. We denote by $(X^n_t)_{t\geq 0}$
the solution to \eqref{eds}, with $U_n$ instead of $U$.
By a classical uniqueness argument (using that $\nabla U$ is locally 
Lipschitz continuous),
$X$ and $X^n$ coincide until they reach $B(0,n)^c$. 
In particular, $X_t=X^n_t$ for all $t\in [0,\tau]$ and, 
setting 
$$
\zeta_n=\inf\{t\geq 0 : |X_{\tau+t}|\geq n\}=\inf\{t\geq 0 : |X_{\tau+t}^n|\geq n\},
$$
it a.s. holds that $X^n_{\tau+t}=X_{\tau+t}$ for all $t\in [0,\zeta_n]$.
Since $\lim_n \zeta_n=\infty$ a.s., we conclude that for all $t\geq 0$,
$\lim_n U_n(X^n_{\tau+t})=U(X_{\tau+t})$ a.s. 

\vip

As we will check in Step 4, 
\begin{equation}\label{oob}
\sup_{n\geq |x_0|+ 1} \sup_{t\geq 0}\E[U_n(X_{\tau+t}^n)]<\infty.
\end{equation}
By the Fatou lemma, we will conclude that  $\sup_{t\geq 0}\E[U(X_{\tau+t})]<\infty$. By the Fatou Lemma again,
this will imply that $\E[\liminf_{t\to \infty}U(X_t)]<\infty$.
Since $\lim_{|x|\to \infty} U(x)
=\infty$ by $(A)$, 
this will show that $\liminf_{t\to \infty} |X_t|<\infty$ a.s. and thus complete the proof.

\vip

{\it Step 4.} Here we verify \eqref{oob}.
%
Denote, for $x\in \R^d$ and $\beta>0$, by $f^{n,x,\beta}_t$ 
the law at time $t$ of the solution to \eqref{eds}
with $x_0=x$, with $\beta_0$ replaced by $\beta$ and with $U_n$ instead of $U$. 
We then have, since $h$ is the density of $(\tau,X_\tau)=(\tau,X^n_\tau)$,
$$
\E[U_n(X^n_{\tau+t})]=\E[\E[U_n(X^n_{\tau+t})\vert \cF_\tau]]= \int_{[0,1]\times B(x_0,1)} h(u,x) 
\Big[\intrd U_n(y)f^{n,x,\beta_u}_t(\dd y)\Big] \dd u \dd x.
$$
Consider any probability density $f_0\in C_c(\R^d)$ such that 
$f_0>c \indiq_{B(x_0,1)}$, for some constant $c>0$. We thus have 
$h(u,x)\leq C\indiq_{\{u \in [0,1],x\in B(x_0,1)\}}\leq (C/c) f_0(x)$, and write 
$$
\E[U_n(X^n_{\tau+t})]\leq  \frac Cc \sup_{u\in[0,1]}\intrd f_0(x) 
\Big[\intrd U_n(y)f^{n,x,\beta_u}_t(\dd y)\Big]\dd x
= \frac Cc  \sup_{u\in[0,1]}\E[U_n(Y^{n,u}_{t})],
$$
where $(Y^{n,u}_t)_{t\geq 0}$ is the solution to \eqref{eds} starting from $X_0\sim f_0$, with
$\beta_0$ replaced by $\beta_u$ and $U$ by $U_n$. To conclude the step, it only remains to
verify that $\sup_{n\geq |x_0|+1} \sup_{u\in[0,1]}\sup_{t\geq 0} \E[U_n(Y^{n,u}_{t})] <\infty$.

\vip

But Lemmas \ref{super} and \ref{r1} tell us that, setting 
$\kappa_{n,u}(f_0)=\intrd f_0(x)\log(1+f_0(x)e^{\beta_u U_n(x)})\dd x$ and $a_n=[\intrd e^{-\alpha_0 U_n(x)}\dd x]^{-1}$, 
it holds that 
$$
\E[U_n(Y^{n,u}_{t})]\leq \frac{\kappa_{n,u}(f_0)-\log a_n}{\beta_u-\alpha_0}.
$$
This last quantity is uniformly bounded, because 

\vip

\noindent $\bullet$ $\beta_u\geq \beta_0>\alpha_0$ for all $u\in[0,1]$ (by Step 1); 

\vip

\noindent $\bullet$ $\sup_{n\geq |x_0|+1,u\in [0,1]} \kappa_{n,u}(f_0) < \infty$,
since $f_0 \in C_c(\R^d)$, $\beta_u \leq \beta_1$ for all $u\in [0,1]$ and $U_n(x)=U(x)$ for all $x\in$ Supp $f_0$ 
if $n$ is large enough;

\vip
\noindent  $\bullet$ $\sup_{n\geq |x_0|+1} (-\log a_n) < \infty$, since
$\intrd e^{-\alpha_0 U_n(x)}\dd x\leq e^{\alpha_0}[\intrd e^{-\alpha_0 U(x)}\dd x+\intrd e^{-\alpha_0 |x|}\dd x] <\infty$,
recall that $U_n(x)\geq \min\{U(x),|x|\}-1$.
\end{proof}

\section{Localization and absorption}\label{imp2}

Here we prove that on the event where $\sup_{t\geq 0} |X_t|<\infty$,
the simulated annealing procedure is successful.
We also check that each time the process $(X_t)_{t\geq 0}$ comes back in a given compact,
it has a large probability to be absorbed forever in a (larger) compact.

\begin{lem}\label{fmt}
Assume $(A)$, fix $c>c_*$, $x_0\in \R^d$ and $\beta_0>0$ and consider the solution
$(X_t)_{t\geq 0}$ to \eqref{eds}. For any $\e>0$,
$$
\lim_{t\to \infty}\PR\Big(\sup_{s\geq 0} |X_s|<\infty \hbox{ and } U(X_t)>\e\Big)=0.
$$
\end{lem}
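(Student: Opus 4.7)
The plan is to localize to a large flat torus so that the compact result of Holley-Kusuoka-Stroock \cite{hks} applies.

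First, I would write $\{\sup_{s\geq 0}|X_s|<\infty\}=\bigcup_{K\in\N}\{\sup_{s\geq 0}|X_s|\leq K\}$ as an increasing union. Fixing arbitrary $\delta>0$, by monotone continuity of $\PR$ there is $K_\delta$ such that for all $K\geq K_\delta$, $\PR\bigl(\{\sup_s|X_s|<\infty\}\setminus\{\sup_s|X_s|\leq K\}\bigr)<\delta$. Enlarging $K$ further, I may assume $x_0\in B(0,K-1)$ and that $B(0,K-1)$ contains a global minimum of $U$. It then suffices to show that for each such $K$, $\PR(\sup_{s\geq 0}|X_s|\leq K,\,U(X_t)>\e)\to 0$ as $t\to\infty$.

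Second, I would build a compact avatar of the problem. For $L>2K+2$, embed $B(0,K+1)$ isometrically into the flat torus $\mathbb{T}_L^d$, and construct a smooth potential $\tilde U:\mathbb{T}_L^d\to\R_+$ such that $\tilde U=U$ on $B(0,K+1)$, $\min \tilde U=0$, and the torus barrier constant $\tilde c_*:=c_*(\tilde U)<c$. Such a $\tilde U$ exists for $K$ large: one extends $U$ outside $B(0,K+1)$ by a smooth function strictly increasing toward a single maximum at the antipode of $B(0,K+1)$, thereby creating no new local minima. The local minima of $\tilde U$ are then exactly those of $U$ inside $B(0,K+1)$; for $K$ large enough that the near-optimal paths realizing $E(x,y)$ for each such local minimum $x$ and each global minimum $y$ (which lie in the bounded sublevel set $\{U\leq c_*+\max_{B(0,K+1)}U+1\}$ by $(A)$) fit inside $B(0,K+1)$, one obtains $\tilde c_*\leq c_*+\eta$ for arbitrarily small $\eta>0$, hence $\tilde c_*<c$.

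Third, I would couple and invoke \cite{hks}. Let $\tilde X$ be the torus-valued solution of the analogous SDE $\dd\tilde X_t=\dd B_t-\tfrac12\beta_t\nabla\tilde U(\tilde X_t)\dd t$, driven by the same Brownian motion as $X$ and starting from the image of $x_0$. Since $\nabla \tilde U=\nabla U$ on $B(0,K+1)$ and $\nabla U$ is locally Lipschitz, pathwise uniqueness yields $X_t=\tilde X_t$ under the quotient projection as long as $|X_s|<K+1$ for all $s\leq t$. In particular, on the event $\{\sup_s|X_s|\leq K\}$, $U(X_t)=\tilde U(\tilde X_t)$ for every $t$, so
$$\PR(\sup_{s\geq 0}|X_s|\leq K,\,U(X_t)>\e)\leq \PR(\tilde U(\tilde X_t)>\e).$$
By Holley-Kusuoka-Stroock \cite{hks} on $\mathbb{T}_L^d$ with potential $\tilde U$ and cooling rate $c>\tilde c_*$, the right-hand side tends to $0$ as $t\to\infty$. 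Combining with the first step, $\limsup_t\PR(\sup_s|X_s|<\infty,U(X_t)>\e)\leq\delta$; sending $\delta\to 0$ concludes.

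The main obstacle will be the construction of $\tilde U$ with $\tilde c_*<c$: the extension must neither create new local minima nor introduce barriers that bypass those of $U$ by going around the torus. The monotone radial extension handles the first; the compactness of the sublevel sets $\{U\leq A\}$ under $(A)$, together with the slack $c-c_*>0$, handles the second, provided $K$ is chosen large enough.
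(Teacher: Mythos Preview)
Your overall strategy---localize to a large flat torus, couple via the same Brownian motion, and invoke Holley--Kusuoka--Stroock---is exactly the paper's approach. The gap is in your construction of the torus potential $\tilde U$ and the verification that $\tilde c_*<c$.

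You argue that near-optimal paths from each local minimum $x\in B(0,K+1)$ to a global minimum lie in the sublevel set $\{U\le c_*+\max_{B(0,K+1)}U+1\}$ and then ``fit inside $B(0,K+1)$'' for $K$ large. But this containment cannot hold: by continuity, $\inf_{|z|\ge K+1}U(z)\le\max_{|z|\le K+1}U(z)$, so the sublevel set $\{U\le \max_{B(0,K+1)}U+c_*+1\}$ always reaches beyond $B(0,K+1)$. Under $(A)$ the potential may have local minima arbitrarily far out with arbitrarily high $U$-values, and nothing prevents the near-optimal path from such a minimum, sitting just inside your ball, from having to exit the ball before reaching the global minimum. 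There is also a secondary issue: if $\partial_r U<0$ somewhere on $\partial B(0,K+1)$---which $(A)$ certainly allows---then any smooth extension that eventually increases radially must create a new local minimum just outside the sphere, contrary to your claim.

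The paper sidesteps both difficulties by basing the construction on the \emph{sublevel set} $\{U\le K\}$ rather than a Euclidean ball: it takes $U_K$ to be a smooth version of $\min\{U,K\}$ on a torus large enough to contain $\{U\le K\}$ (Notation~\ref{tore}). Because $U_K\le U$ pointwise, every path $\gamma$ in $\R^d$ satisfies $\max U_K\circ\gamma\le\max U\circ\gamma$, which yields $c_*^K\le c_*<c$ directly, with no need to locate optimal paths. With that construction and the coupling of Remark~\ref{compegpascomp} in place, your first and third steps go through verbatim and the proof is complete.
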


\begin{prop}\label{crucial}
Assume $(A)$ and fix $c>c_*$.
For any $A\geq 1$, there is $b_A>1$ and $K_A>A$ such that if 
$x_0\in \{U\leq A\}$ and $\beta_0\geq b_A$,
for $(X_t)_{t\geq 0}$ the solution to \eqref{eds}, we have
$$
\PR\Big(\sup_{t\geq 0} U(X_t) \leq K_A\Big)\geq \frac12.
$$
\end{prop}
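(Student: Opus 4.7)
The plan is to compare $X$ with a process living on a large flat torus, apply the Holley--Kusuoka--Stroock result on the torus (where $\limsup_{t\to\infty}\tilde U(\tilde X_t)=c$ a.s., since $c>\tilde c_*$), and track the HKS constants so as to keep the bounds uniform in $x_0\in\{U\leq A\}$ and $\beta_0\geq b_A$.

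I first choose $M$ large (to be fixed later as a function of $A$) and pick $R_M$ with $\{U\leq M\}\subset B(0,R_M)$, using $\lim_{|x|\to\infty} U(x)=\infty$. I then construct a smooth $\tilde U\colon\mathbb{T}_M\to\R_+$ on a flat torus $\mathbb{T}_M$ of side $4R_M+4$ that agrees with $U$ on (the image of) $B(0,R_M+1)$, satisfies $\min\tilde U=0$, and whose barrier constant $\tilde c_*$ is at most $c_*<c$; the periodization can be arranged so by forcing $\tilde U$ to climb far above $c_*$ in the transition region. The torus SDE with potential $\tilde U$, driven by the same Brownian motion as $X$ and started at the image of $x_0$, has a unique solution $\tilde X$; pathwise uniqueness for \eqref{eds} then gives $X_t=\tilde X_t$ (after lifting $\tilde X$ to $\R^d$) up to the first exit time $\sigma$ of $X$ from $B(0,R_M+1)$. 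Thus, if $K_A<M$ is chosen so that $\{U\leq K_A\}\subset B(0,R_M-1)$, the event $\{\sup_t\tilde U(\tilde X_t)\leq K_A\}$ forces $\sigma=\infty$ and hence $\sup_t U(X_t)\leq K_A$.

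It remains to bound the torus event from below, which is the main obstacle. HKS \cite{hks} together with Miclo \cite{mic2} yields $\limsup_{t\to\infty}\tilde U(\tilde X_t)=c$ a.s.\ on $\mathbb{T}_M$, so $\sup_t\tilde U(\tilde X_t)<\infty$ a.s., but what I need is the quantitative inequality
$$ \PR\bigl(\sup_t\tilde U(\tilde X_t)\leq K_A\bigr)\geq\tfrac12 $$
uniformly in $x_0\in\{U\leq A\}$ and $\beta_0\geq b_A$. I split time as $[0,T_A]\cup[T_A,\infty)$. On $[T_A,\infty)$ I revisit the HKS proof with initial data given by the law of $\tilde X_\tau$, where $\tau$ is the stopping time provided by Lemma \ref{tau}: that lemma gives a density bounded by $\exp(C_A^{(1)}(\beta_0+1))\indiq_{B(x_0,1)}$, which combined with $x_0\in\{U\leq A\}$ yields a quantitative bound on the relative entropy with respect to $\mu_{\beta_0}\propto e^{-\beta_0\tilde U}$ depending only on $A$ and $\beta_0$. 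Plugging this bound into the HKS spectral-gap and Poincar\'e-type estimates on $\mathbb{T}_M$ delivers $T_A$ and $K_A>c$ such that $\PR\bigl(\sup_{t\geq T_A}\tilde U(\tilde X_t)>K_A\bigr)\leq 1/4$, uniformly over the targeted range. On $[0,T_A]$, largeness of $\beta_0$ prevents $\tilde X$ from climbing high above $\tilde U(x_0)\leq A$: an It\^o expansion of $\tilde U(\tilde X_t)$, combined with the uniform-in-time moment bound of Lemma \ref{super} applied after the regularizing time $\tau$, gives $\PR\bigl(\sup_{t\leq T_A}\tilde U(\tilde X_t)>K_A\bigr)\leq 1/4$ provided $K_A$ and $b_A$ are chosen large enough.

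Summing the two contributions yields the required $1/2$. The genuinely delicate point is securing $T_A$, $K_A$ and $b_A$ that are independent of $x_0\in\{U\leq A\}$ and of the particular $\beta_0\geq b_A$: this forces one to rework the HKS argument with explicit tracking of the dependence on the initial relative entropy, and it is exactly here that Lemma \ref{tau} is essential, since it replaces the singular Dirac initial condition by a density whose $L^\infty$ norm has a fully quantitative dependence on $A$ and $\beta_0$.
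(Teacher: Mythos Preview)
Your outline contains the right preliminary ingredients --- the torus comparison, the regularization via Lemma \ref{tau}, and the HKS spectral gap on $\mathbb{T}_M$ --- but it has a genuine gap at the decisive step.

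The HKS spectral-gap machinery, combined with the bounded density provided by Lemma \ref{tau} and the Cauchy--Schwarz inequality, yields an estimate of the form $\PR(\tilde U(\tilde X_t)\geq D)\leq C/(e^{c\beta_0}+t)^{2}$ \emph{pointwise in $t$} (this is exactly the paper's Lemma \ref{cs}). What you need, however, is a bound on the supremum $\PR(\sup_{t\geq 0}\tilde U(\tilde X_t)\geq K_A)$, and your proposal simply asserts that ``plugging this bound into the HKS spectral-gap and Poincar\'e-type estimates delivers $T_A$ and $K_A$'' with the supremum controlled. This is precisely the nontrivial step, and it does not follow from the spectral-gap estimate alone. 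The paper closes this gap with an up-crossing argument: one takes a smooth function $\psi_A=\phi_A\circ U_{K_A}$ that is supported on $\{D_A\leq U_{K_A}\leq K_A\}$ and equals $1$ somewhere in that shell, writes $\psi_A(X^{K_A}_{\tau+t})=M_t+R_t$ by It\^o's formula, bounds $\E[\sup_t|R_t|]$ using the \emph{time-integrability} of the pointwise estimate, and then invokes Doob's up-crossing inequality on the bounded martingale $(M_{t\wedge\sigma})$. Without this (or an equivalent device), there is no passage from pointwise decay to a supremum bound.

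Your time-splitting $[0,T_A]\cup[T_A,\infty)$ is also problematic. On $[0,T_A]$ you appeal to ``an It\^o expansion of $\tilde U(\tilde X_t)$ combined with Lemma \ref{super}'', but Lemma \ref{super} only controls $\sup_t\E[U(X_t)]$, not $\E[\sup_t U(X_t)]$, and the claim that ``largeness of $\beta_0$ prevents $\tilde X$ from climbing'' is not substantiated: near critical points of $\tilde U$ the drift vanishes regardless of $\beta_0$, so a large $\beta_0$ does not by itself prevent level crossings on a fixed finite horizon. In the paper's argument there is no such splitting: the pointwise bound $C/(e^{c\beta_0}+t)^{2}$ is valid and integrable for \emph{all} $t\geq 0$ once $\beta_0\geq b_{K_A}^{(1)}$, and the up-crossing argument handles the supremum over all of $[0,\infty)$ at once. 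Lemma \ref{super} plays no role in this proposition.
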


The rest of the section is dedicated to the proof of these two results.
Lemma \ref{fmt} will easily follow from a result of Holley-Kusuoka-Stroock \cite{hks}
concerning the compact case.

\vip

Concerning Proposition \ref{crucial}, let us recall from 
Holley-Kusuoka-Stroock \cite{hks}, see also Miclo \cite{mic2}, that
in the compact setting, $\limsup_{t\to \infty} U(X_t)=c$ a.s. and moreover for any
$\e>0$, if $x_0$ belongs to a connected component of $\{U\leq c+\e\}$ containing
a global minimum of $U$, it holds that $\PR(\sup_{t\geq 0} U(X_t) \leq c+\e)>0$.
This immediately extends to the non-compact setting, since the set $\{U\leq c+\e\}$
is compact. Unfortunately, such a result is not uniform in $\beta_0>0$, and we really need
a uniform bound, see Step 2 of the proof of Proposition \ref{mm} in Section \ref{con}.
We believe it is not possible to deduce Proposition \ref{crucial} from \cite{hks,mic2}.
At this end, we have to work hard, following the ideas of \cite{hks}, taking much less care about 
many constants and obtaining
much less precise results (e.g. it might be possible to control $K_A$ in Proposition \ref{crucial})
but carefuly tracking the dependence in $\beta_0$ and $x_0$.

\vip

In the whole section, we assume $(A)$ and work with some fixed $c>c_*$. 
We introduce some notation.

\begin{nota}\label{tore}
Let $K\geq 1$.

\vip

(a) We consider $L_K>0$ such that $\{U\leq K\}\subset [-(L_K-1),(L_K-1)]^d$.
We denote by $M_K$ the torus $[-L_k,L_K)^d$, that is $\R^d$ 
quotiented by the equivalence relation
$x\sim y$ if and only if for all $i=1,\dots,d$, $(x_i-y_i)/(2L_K) \in \Z$.

\vip

(b) We also consider $U_K \in C^\infty(M_K)$ such that $\min_{M_K}U_K=0$, such that
$U_K(x)=U(x)$ for all $x\in \{U\leq K\}$, and such that
$$
c_*^K=\sup\{E_K(x,y) : x,y\in M_K\}\leq c_*,
$$
where $E_K(x,y)=\inf \{\max_{t\in [0,1]}U_K(\gamma_t)-U_K(x)-U_K(y) \;:\;
\gamma \in C([0,1],M_K), \gamma_0=x,\gamma_1=y\}$.

\vip

(c) For $x_0 \in \{U\leq K\}\subset M_K$ and $\beta_0>0$, we introduce
the inhomogeneous $M_K$-valued diffusion
\begin{equation}\label{edsK}
X^K_t = x_0+ B_t - \frac12 \intot \beta_s \nabla U_K(X^K_s)\dd s  \hbox{ modulo } 2L_K,
\end{equation}
where $(B_t)_{t\geq 0}$ is a $d$-dimensional Brownian motion, where
$\beta_t=c^{-1}\log(e^{c\beta_0}+t)$ as in \eqref{eds} and
$$
\hbox{for $x=(x_1,\dots,x_d) \in \R^d$, } \quad
x \hbox{ modulo } 2L_K=
\Big(x_i - 2L_K\Big\lfloor \frac{x_i+L_K}{2L_K}\Big\rfloor \Big)_{i=1,\dots,d} 
\in [-L_K,L_K)^d.
$$
\end{nota}

For point (b), it suffices to choose a smooth version of $U_K=\min\{U,K\}$, see 
\cite[Step 1 of the proof of Lemma 6]{fmt}.
Since $U_K=U$ on $\{U\leq K\}$ and since $U$ is locally Lipschitz continuous, 
a simple uniqueness argument shows the following.

\begin{rk}\label{compegpascomp}
For any $K\geq 1$, any $x_0\in \{U\leq K\}$, any $\beta_0>0$,
for $(X_t)_{t\geq 0}$ the solution to \eqref{eds} and $(X^K_t)_{t\geq 0}$
the solution to \eqref{edsK}, both driven by the same Brownian motion,
it holds that
\begin{align*}
\Big\{\sup_{t\geq 0} U_K(X^K_t)\leq K\Big\}= 
\Big\{\sup_{t\geq 0} U_K(X^K_t)\leq K,\sup_{t\geq 0} |X^K_t-X_t|=0\Big\}
= \Big\{\sup_{t\geq 0} U(X_t)\leq K\Big\}.
\end{align*}
\end{rk}

We can now give the

\begin{proof}[Proof of Lemma \ref{fmt}]
By \cite[Theorem 2.7]{hks} and since $c>c_*\geq c_*^K$, $U_K(X^K_t)\to 0$ in probability, as $t\to \infty$,
for each $K\geq 1$. We fix $\eta>0$. Since $\lim_{|x|\to\infty} U(x)=\infty$, there is $K_\eta>0$
such that $\PR(\sup_{s\geq 0} |X_s|<\infty, \sup_{s\geq 0} U(X_s)>K_\eta)\leq \eta$.
We then write, using Remark \ref{compegpascomp},
\begin{align*}
\PR\Big(\sup_{s\geq 0} |X_s|<\infty \hbox{ and } U(X_t)>\e\Big)\leq &
\eta + \PR\Big(\sup_{s\geq 0} U(X_s)\leq K_\eta \hbox{ and } U(X_t)>\e\Big)\\
=& \eta + \PR\Big(\sup_{s\geq 0} U_{K_\eta}(X_s^{K_\eta})\leq K_\eta \hbox{ and } U_{K_\eta}(X_t^{K_\eta})>\e\Big)\\
\leq & \eta + \PR\Big(U_{K_\eta}(X_t^{K_\eta})>\e\Big).
\end{align*}
We conclude that $\limsup_{t\to\infty}\PR(\sup_{s\geq 0} |X_s|<\infty 
\hbox{ and } U(X_t)>\e)\leq \eta$,
whence the result since $\eta>0$ is arbitrarily small.
\end{proof}

We next introduce the invariant probability measure of the time-homogeneous version of \eqref{edsK}.

\begin{rk}\label{ttt}
There is a constant $\kappa_0>0$ such that, for all $K\geq 1$, all $\beta>0$,
it holds that
$$
\cZ^K_\beta:= \intMK \exp(-\beta U_K(x))\dd x \geq \kappa_0(\beta+1)^{-d}.
$$
We also have $\cZ^K_\beta\leq (2L_K)^d$.
We introduce the probability density 
$$
\mu_\beta^K(x)=(\cZ^K_\beta)^{-1} \exp(-\beta U_K(x)), \quad x\in M_K.
$$
\end{rk}

\begin{proof}
Since $\min_{\R^d} U=0$, there is $x_* \in \R^d$ such that $U(x_*)=0$. Fix $r_*>0$
such that $B(x_*,r_*)\subset \{U\leq 1\}\subset M_K$. 
Denote by $C=\sup_{B(x_*,r_*)} |\nabla U|$.
For all $K\geq 1$, all $x\in B(x_*,r_*)$, we have that $x\in \{U\leq 1\}\subset M_K$ and
$U_K(x)=U(x)\leq C|x-x_*|$. Hence for all $\beta>0$,
$$
\cZ_\beta^K\geq \int_{B(x_*,r_*)} \exp(-\beta C|x-x_*|)\dd x
\geq e^{-1}\hbox{Vol}(B(x_*,r_*\land (1/(C\beta))),
$$
from which the lower-bound follows. The upper-bound is trivial.
\end{proof}

As a final preliminary, we recall the crucial
spectral gap estimate of Holley-Kusuoka-Stroock \cite[Theorem 1.14
and Remark 1.16]{hks}, in the special case of the torus. We use that
$c_*^K\leq c_*$, see Notation \ref{tore}-(b) (in the notation of \cite{hks},
$m=c_*^K$).

\begin{lem}[Holley-Kusuoka-Stroock] \label{hksl}
Fix $K\geq 1$. There is a constant $\gamma_K>0$ such that for all 
$\phi \in C^1(M_K)$, for all $\beta>0$,
$$
\intMK |\nabla \phi(x)|^2 \mu^K_\beta(x)\dd x \geq \lambda_K(\beta)
\intMK\Big(\phi(x)-\intMK\phi(y)\mu^K_\beta(y)\dd y \Big)^2 \mu^K_\beta(x)\dd x,
$$
with
$$
\lambda_K(\beta)=\gamma_K (\beta+1)^{2-5d}\exp(-\beta c_*).
$$
\end{lem}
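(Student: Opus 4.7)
The plan is to follow the strategy of Holley-Kusuoka-Stroock: decompose $M_K$ into basins of attraction around the local minima of $U_K$, prove a local Poincar\'e inequality on each basin with polynomial-in-$\beta$ constant, and then glue these local estimates together with a discrete Poincar\'e inequality on the set of basins whose spectral gap produces the Arrhenius factor $e^{-\beta c_*}$.

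First I would exploit the compactness of $M_K$ and the smoothness of $U_K$ (making a small $C^\infty$ perturbation if needed) to reduce to the case where $U_K$ is a Morse function with a finite set of local minima $m_1,\dots,m_N$. Partition $M_K$ into measurable cells $V_1,\dots,V_N$ with $m_i\in V_i$ and boundaries lying at or near the separating saddles. By definition of $c_*^K$ and the bound $c_*^K\leq c_*$, for every $i,j$ one can find a continuous path from $m_i$ to $m_j$ along which $U_K$ attains a maximum at most $U_K(m_i)+U_K(m_j)+c_*$.

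Second, on each cell I would establish a local Poincar\'e inequality for $\mu_\beta^K$ restricted to $V_i$, with constant polynomial in $\beta$. Near the quadratic minimum $m_i$, a Brascamp-Lieb or Laplace-method argument yields a local constant of order $(\beta+1)^{-1}$; extending from a small ball around $m_i$ to the whole basin $V_i$ costs only extra polynomial factors in $\beta$ controlled by the ratio of the $e^{-\beta U_K}$-masses of $V_i$ and of the ball. Then apply the standard variance decomposition
\begin{equation*}
\operatorname{Var}_{\mu_\beta^K}(\phi)=\sum_i \mu_\beta^K(V_i)\operatorname{Var}_{\mu_\beta^K|_{V_i}}(\phi)+\operatorname{Var}_{\nu_\beta}(\bar\phi),
\end{equation*}
where $\nu_\beta(\{i\})=\mu_\beta^K(V_i)$ and $\bar\phi_i=\mu_\beta^K(V_i)^{-1}\intMK \phi\,\indiq_{V_i}\dd\mu_\beta^K$. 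The within-cell term is handled by the local inequalities; the between-cell term reduces to a discrete Poincar\'e inequality on $\{1,\dots,N\}$ whose spectral gap, by a capacity/bottleneck estimate, is bounded below up to polynomial factors by $e^{-\beta c_*}$. The key point is that the $\mu_\beta^K$-mass of the thin "transition region" around the lowest saddle separating $V_i$ and $V_j$ is of order $(\cZ_\beta^K)^{-1}e^{-\beta(U_K(m_i)+U_K(m_j)+c_*^K)}$, up to polynomial prefactors.

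The main obstacle is the delicate bookkeeping that produces the exponent $2-5d$ in the polynomial prefactor. One must combine Laplace-type expansions of $\cZ_\beta^K$ and of the local partition functions $\int_{V_i} e^{-\beta U_K}\dd x$ (each contributing factors $\beta^{\pm d/2}$), the $\beta^{-1}$ cost of each local Poincar\'e inequality, and the polynomial correction to the conductance coming from the saddle-point expansion. In the worst case these combine to $(\beta+1)^{-(5d-2)}\,e^{-\beta c_*}$, which is precisely the estimate of \cite[Theorem 1.14 and Remark 1.16]{hks} that we simply quote; reproving it in detail would add nothing to our argument.
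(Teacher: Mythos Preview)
The paper does not prove this lemma at all: it is stated as a quotation of \cite[Theorem 1.14 and Remark 1.16]{hks}, with the only remark being that $c_*^K\leq c_*$ (so that one may write $e^{-\beta c_*}$ in place of $e^{-\beta c_*^K}$). Your proposal correctly identifies this and ultimately does the same thing---cite the result---after sketching the HKS basin-decomposition strategy; that sketch is reasonable but unnecessary here, and the paper omits it entirely.
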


The constant $\gamma_K$ drastically depends on $K$ but, as we will see, 
this is not an issue.

\begin{lem}\label{hksb}
Fix $K\geq 1$. There is a constant $b_K^{(1)}>0$ 
such that if $\beta_0\geq b_K^{(1)}$, then 
for any density $f_0^K \in C(M_K)$, for $f^K_t$ the density of $X^K_t$, 
the solution to \eqref{edsK} starting from $X_0^K\sim f_0^K$,
$$
\forall \; t\geq 0,\quad \intMK \frac{(f^K_t(x))^2}{\mu^K_{\beta_t}(x)} \dd x \leq
\max \Big(2,\intMK \frac{(f^K_0(x))^2}{\mu^K_{\beta_0}(x)} \dd x\Big).
$$
\end{lem}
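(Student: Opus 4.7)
The strategy is to control $I_t := \intMK (f^K_t(x))^2/\mu^K_{\beta_t}(x)\, \dd x$ via a differential inequality ensuring $I'_t \leq 0$ whenever $I_t \geq 2$; the claim then follows by a first-passage argument together with continuity of $t\mapsto I_t$ at $t=0$. Since $U_K$ is smooth on the compact torus $M_K$, standard parabolic regularity ensures that $f^K_t$ is smooth on $(0,\infty)\times M_K$, satisfies the Fokker--Planck equation in strong form $\partial_t f^K_t = \tfrac12\ddiv[\mu^K_{\beta_t}\nabla(f^K_t/\mu^K_{\beta_t})]$, and converges uniformly to $f^K_0$ as $t\to 0^+$.

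Differentiating $I_t$ for $t>0$ yields two contributions. The first, after integration by parts using the Fokker--Planck equation, reduces to $-\intMK \mu^K_{\beta_t}|\nabla(f^K_t/\mu^K_{\beta_t})|^2\dd x$. Using $\partial_t\log\mu^K_{\beta_t}(x) = -\beta'_t(U_K(x) - \intMK U_K(y)\mu^K_{\beta_t}(y)\dd y)$, the second equals $\beta'_t\intMK \tfrac{(f^K_t(x))^2}{\mu^K_{\beta_t}(x)}(U_K(x) - \intMK U_K(y)\mu^K_{\beta_t}(y)\dd y)\dd x$, whose absolute value is at most $\beta'_t V_K I_t$ with $V_K := \sup_{M_K}U_K <\infty$. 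Applying Lemma \ref{hksl} to $\phi := f^K_t/\mu^K_{\beta_t}$, whose $\mu^K_{\beta_t}$-mean equals $\intMK f^K_t\dd x = 1$, gives $\intMK |\nabla\phi|^2\mu^K_{\beta_t}\dd x \geq \lambda_K(\beta_t)(I_t-1)$. Altogether,
$$
I'_t \leq -\lambda_K(\beta_t)(I_t-1) + \beta'_t V_K I_t.
$$

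Since $\beta'_t = e^{-c\beta_t}/c$ and $\lambda_K(\beta_t) = \gamma_K(\beta_t+1)^{2-5d}e^{-c_*\beta_t}$, the ratio $\beta'_t V_K/\lambda_K(\beta_t)$ equals a constant multiple of $(\beta_t+1)^{5d-2}e^{-(c-c_*)\beta_t}$, which decays to $0$ as $\beta_t\to\infty$ precisely because $c>c_*$. Choosing $b_K^{(1)}$ large enough that this quantity is at most $1/2$ for every $\beta\geq b_K^{(1)}$, one obtains, for $\beta_0\geq b_K^{(1)}$, the bound $I'_t\leq \lambda_K(\beta_t)(1 - I_t/2)$, which is nonpositive as soon as $I_t\geq 2$. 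The main obstacle is the bookkeeping of the extra term $\beta'_t V_K I_t$ coming from the non-stationarity of $\mu^K_{\beta_t}$: in the autonomous case only the Poincar\'e dissipation would appear and the bound would be immediate. The whole point of the hypothesis $c>c_*$ is that the cooling schedule is slow enough, relative to the exponentially small spectral gap, for this residual term to be absorbed by half of the dissipation as soon as $\beta_0$ is large enough --- this is precisely where the simulated-annealing threshold $c_*$ enters.
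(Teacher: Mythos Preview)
Your proof is correct and follows essentially the same route as the paper: both derive the differential inequality $I_t'\leq -\lambda_K(\beta_t)(I_t-1)+\beta_t'\|U_K\|_\infty I_t$ via integration by parts and the Poincar\'e inequality of Lemma~\ref{hksl}, then use $c>c_*$ to absorb the drift term into half of the dissipation for $\beta_0$ large, yielding $I_t'\leq 0$ whenever $I_t\geq 2$. The only cosmetic difference is that the paper bounds the non-stationarity term by first discarding the (nonpositive) contribution of $(\cZ^K_{\beta_t})'$, whereas you keep the centered quantity $U_K-\intMK U_K\mu^K_{\beta_t}$ and bound it by $V_K$; both lead to the same inequality.
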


\begin{proof}
The function $(f^K_t(x))_{t\geq 0,x\in M_K}$ is a weak solution to the uniformly parabolic equation
$\partial_t f^K_t(x) = \frac12\ddiv(\nabla f^K_t(x)+\beta_tf^K_t(x)\nabla U_K(x))$. 
It can be seen as a periodic solution of the same equation in $\R^d$, with $U_K$ and $f_0^K$
replaced by their periodic continuation. We thus can apply some classical results, see
Friedman \cite[Chapter 1, Theorems 10 and 12]{f} and conclude that $(f^K_t(x))_{t\geq 0,x\in M_K}$ belongs to
$C([0,\infty)\times M_K) \cap C^{1,2}((0,\infty)\times M_K)$.
The periodic continuation of $f_0^K$ has an infinite mass, but this is allowed by \cite{f}.
Since furthermore $M_K$ is bounded, all the computations below are easily justified.

\vip

We introduce
$$
\varphi(t)=\intMK \frac{(f^K_t(x))^2}{\mu^K_{\beta_t}(x)} \dd x 
= \cZ^K_{\beta_t}\intMK(f^K_t(x))^2e^{\beta_t U_K(x)}\dd x.
$$
Since $(\cZ^K_{\beta_t})'=-\beta_t'\intMK U_K(x)e^{-\beta_t U_K(x)}\dd x\leq0$, we have, for all $t>0$,
$$
\varphi'(t)\leq  \cZ^K_{\beta_t} \intMK 2[\partial_t f_t^K(x)] f_t^K(x)e^{\beta_t U_K(x)}   \dd x 
+ \beta_t' \cZ^K_{\beta_t} \intMK U_K(x)(f^K_t(x))^2e^{\beta_t U_K(x)}\dd x.
$$
Recalling that $\partial_t f^K_t(x)= \frac12\ddiv(\nabla f^K_t(x)+\beta_tf_t^K(x)
\nabla U_K(x))$, proceeding 
to an integration by parts in the first term and to a rough upper-bound 
in the second one, we find
\begin{align*}
\varphi'(t) \leq& -\cZ^K_{\beta_t} \intMK 
|\nabla f^K_t(x)+\beta_t f_t^K(x)\nabla U_K(x)|^2e^{\beta_t U_K(x)}\dd x
+ \beta'_t ||U_K||_\infty \varphi(t)\\
=&-\intMK \Big|\nabla\Big(\frac{f^K_t(x)}{\mu^K_{\beta_t}(x)}\Big)\Big|^2\mu^K_{\beta_t}(x)\dd x 
+ \beta'_t ||U_K||_\infty \varphi(t) .
\end{align*}
By Lemma \ref{hksl} with $\phi(x)=f^K_t(x)/\mu^K_{\beta_t}(x)$, for which $\intMK \phi(y)\mu^K_{\beta_t}(y)\dd y=1$ , 
we conclude that 
$$
\varphi'(t) \leq -\lambda_K(\beta_t) \intMK 
\Big[\frac{f^K_t(x)}{\mu^K_{\beta_t}(x)}-1\Big]^2\mu^K_{\beta_t}(x)\dd x 
+\beta'_t ||U_K||_\infty \varphi(t)=-\lambda_K(\beta_t)[\varphi(t)-1]
+\beta'_t ||U_K||_\infty \varphi(t).
$$
But we know from Lemma \ref{hksl} that for all $\beta>0$,  
$$
\lambda_K(\beta)\geq\gamma_K (\beta+1)^{2-5d}e^{-\beta c_*}\geq \gamma_K'e^{-\beta(c+c_*)/2}
$$ 
for some other constant $\gamma_K'>0$, since $c>c_*$. Setting 
$\alpha=(c-c_*)/(2c) \in (0,1)$, so that
$(c+c*)/2=c(1-\alpha)$, recalling that $\beta_t=\log(e^{c\beta_0}+t)/c$, we conclude that
$$
\varphi'(t) \leq -\frac{\gamma_K'}{(e^{c\beta_0}+t)^{1-\alpha}}
[\varphi(t)-1]+\frac{ ||U_K||_\infty}
{c(e^{c\beta_0}+t)}\varphi(t).
$$
Let $b_K^{(1)}=\frac1{\alpha c} \log(2||U_K||_\infty/(\gamma'_K c))$, 
so that if $\beta_0\geq b_K^{(1)}$, for all $t\geq 0$, 
$$
\frac{||U_K||_\infty}{c(e^{c\beta_0}+t)}\leq \frac{\gamma_K'}{2(e^{c\beta_0}+t)^{1-\alpha}}, 
$$
whence
$$
\varphi'(t) \leq -\frac{\gamma_K'}{2(e^{c\beta_0}+t)^{1-\alpha}}[\varphi(t)-2].
$$
We classically conclude that indeed, if $\beta_0\geq b_K^{(1)}$,
then for all $t\geq 0$, $\varphi(t) \leq \max(2,\varphi(0))$.
\end{proof}

From the previous lemma and the Cauchy-Schwarz inequality, we deduce the following.

\begin{lem}\label{cs}
For $A\geq 1$,
let $D_A=2C^{(1)}_A+C^{(2)}_A+1+4c$ and $K_A=D_A+1$, 
where $C^{(1)}_A$ was introduced in Lemma \ref{tau}
and where $C^{(2)}_A=\sup_{x\in \{U\leq A\}}\sup_{y\in B(x,2)} U(y)$. 
There is a constant $C^{(3)}_A>0$
such that, if $\beta_0\geq b^{(1)}_{K_A}$ (see Lemma \ref{hksb}) 
and $x_0\in \{U\leq A\}$, it holds that $\sup_{[0,\tau]} U_{K_A}(X^{K_A}_t)\leq D_A$ a.s. and
$$
\forall\; t\geq 0,\quad \PR(U_{K_A}(X^{K_A}_{\tau+t}) \geq D_A)
\leq \frac{C^{(3)}_A}{(e^{c\beta_0}+t)^2},
$$
where $(X^{K_A}_t)_{t\geq 0}$ is the solution to \eqref{edsK} starting from $x_0$ and where
$\tau$ is the stopping time introduced in Lemma \ref{tau} 
(for the solution $(X_t)_{t\geq 0}$
to \eqref{eds} driven by the same Brownian motion as $(X^{K_A}_t)_{t\geq 0}$).
\end{lem}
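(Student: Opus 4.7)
My approach splits into the (easy) uniform bound on $[0,\tau]$ and the (main) tail bound after $\tau$. For the first claim, I use Lemma \ref{tau} together with a standard uniqueness argument: on $[0,\tau]$ the process $X$ stays in $B(x_0,1)$, where $U_{K_A}=U$ (since $U\leq C^{(2)}_A\leq K_A$) and where the torus identification is the identity (since $\{U\leq K_A\}\subset [-(L_{K_A}-1),L_{K_A}-1]^d$), so that $X^{K_A}_t=X_t$ on $[0,\tau]$ and hence $U_{K_A}(X^{K_A}_t)\leq C^{(2)}_A\leq D_A$.

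For the tail bound, the plan is to condition at the stopping time $\tau$ (this is the whole purpose of Lemma \ref{tau}), use the density bound of Lemma \ref{tau} to replace the starting law at $X^{K_A}_\tau=X_\tau$ by a bounded flat density on $B(x_0,1)$, apply Lemma \ref{hksb} to control the density of the shifted process in $L^2(\mu^{K_A}_{\beta_{\tau+t}})$, and then use Cauchy-Schwarz together with Remark \ref{ttt} to estimate the Gibbs tail $\{U_{K_A}\geq D_A\}$. More precisely, the strong Markov property at $\tau$ combined with the bound $h(u,x)\leq e^{C^{(1)}_A(\beta_0+1)}\indiq_{[0,1]\times B(x_0,1)}(u,x)$ reduces the problem to estimating, uniformly in $u\in[0,1]$, the probability that $U_{K_A}(Y^u_t)\geq D_A$, where $Y^u$ solves \eqref{edsK} with $\beta_0$ replaced by $\beta_u\geq\beta_0\geq b^{(1)}_{K_A}$ and with initial density $f_0=V^{-1}\indiq_{B(x_0,1)}$, $V=|B(x_0,1)|$. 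Since $U\leq C^{(2)}_A$ on $B(x_0,1)$, one has $\int f_0^2/\mu^{K_A}_{\beta_u}\dd x\leq C\,e^{\beta_0 C^{(2)}_A}$, which Lemma \ref{hksb} propagates to all $t\geq 0$; Cauchy-Schwarz then yields
$$
\PR(U_{K_A}(Y^u_t)\geq D_A)\leq C\,e^{\beta_0 C^{(2)}_A/2}\sqrt{\mu^{K_A}_{\beta_{u+t}}(\{U_{K_A}\geq D_A\})},
$$
and the factor on the right is at most $C_\delta\,(e^{c\beta_0}+t)^{-(D_A-\delta)/(2c)}$ for any $\delta\in(0,1)$, after absorbing the polynomial factor $(\beta_{u+t}+1)^d$ coming from Remark \ref{ttt} via $(\beta+1)^d e^{-\beta\delta}\leq C_\delta$.

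Integrating against the density bound from Lemma \ref{tau} yields a final estimate of the form $C\,e^{\beta_0(C^{(1)}_A+C^{(2)}_A/2)}(e^{c\beta_0}+t)^{-(D_A-\delta)/(2c)}$. Writing $(e^{c\beta_0}+t)^{-(D_A-\delta)/(2c)}=(e^{c\beta_0}+t)^{-2}(e^{c\beta_0}+t)^{2-(D_A-\delta)/(2c)}$ and, assuming $(D_A-\delta)/(2c)\geq 2$, bounding the second factor by $e^{\beta_0(2c-(D_A-\delta)/2)}$ via $e^{c\beta_0}+t\geq e^{c\beta_0}$, the choice $D_A=2C^{(1)}_A+C^{(2)}_A+1+4c$ together with $\delta=1/2$ makes all $\beta_0$-dependent exponentials collapse into a factor $e^{-\beta_0/4}\leq 1$, leaving exactly the desired $(e^{c\beta_0}+t)^{-2}$. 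The main technical obstacle is precisely this bookkeeping: the density bound of Lemma \ref{tau} contributes $e^{C^{(1)}_A\beta_0}$, the entropy bound of Lemma \ref{hksb} contributes $e^{C^{(2)}_A\beta_0/2}$, and both must be exactly absorbed by the decay $e^{-\beta_{u+t}D_A/2}$ coming from the square root of the Gibbs mass of $\{U_{K_A}\geq D_A\}$ — this delicate balance is what forces the specific numerical value of $D_A$.
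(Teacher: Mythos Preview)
Your argument is correct and follows essentially the same route as the paper: condition at $\tau$, replace the (sub-)probability law of $X_\tau$ by a bounded reference density via Lemma~\ref{tau}, propagate the $L^2(\mu^{K_A}_{\beta})$-bound with Lemma~\ref{hksb}, apply Cauchy--Schwarz against the Gibbs tail, and balance the three exponential contributions $e^{C^{(1)}_A\beta_0}$, $e^{C^{(2)}_A\beta_0/2}$, $e^{-\beta_{u+t}D_A/2}$ to extract $(e^{c\beta_0}+t)^{-2}$.

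One small technical point: you take $f_0=V^{-1}\indiq_{B(x_0,1)}$, but Lemma~\ref{hksb} is stated for $f_0^K\in C(M_K)$ (continuity is used in its proof to invoke Friedman's parabolic regularity). The paper handles this by choosing a continuous $f_0$ with $(2v_d)^{-1}\indiq_{B(x_0,1)}\leq f_0\leq \indiq_{B(x_0,2)}$; this is precisely why $C^{(2)}_A$ is defined with balls of radius~$2$ rather than~$1$. With that cosmetic fix your bookkeeping goes through unchanged.
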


\begin{proof}
We fix $A\geq 1$, $\beta_0\geq b^{(1)}_{K_A}$ and $x_0 \in \{U\leq A\}$.
First, since 
$B(x_0,1)\subset \{U\leq D_A\}$
(because $D_A\geq C^{(2)}_A$), since $\sup_{[0,\tau]}|X_t-x_0|\leq 1$ and since
$D_A \leq K_A$, 
Remark \ref{compegpascomp} tells us that 
$X_t=X^{K_A}_t$ for all $t\in [0,\tau]$. In particular, 
$\sup_{[0,\tau]} U_{K_A}(X^{K_A}_t)\leq D_A$ and the law of $(\tau,X^{K_A}_\tau)=(\tau,X_\tau)$
has a density $h(u,x)$ bounded by $e^{C^{(1)}_A(\beta_0+1)}\indiq_{\{u\in[0,1],x\in B(x_0,1)\}}$, 
see Lemma \ref{tau}.

\vip

Denote, for $x\in \R^d$ and $\beta>0$, by $f^{K_A,x,\beta}_t$
the law of the solution of \eqref{edsK} with $K=K_A$, with $x_0=x$ and 
with $\beta_0$ replaced by $\beta$.
We then have
\begin{align*}
\PR(U_{K_A}(X^{K_A}_{\tau+t}) \geq D_A) = &
\E[\PR(U_{K_A}(X^{K_A}_{\tau+t}) \geq D_A |\cF_\tau)] \\
=& \int_{[0,1]\times B(x_0,1)} h(u,x)
\Big[\int_{\{U_{K_A}\geq D_A\}} f^{K_A,x,\beta_u}_t(\dd y)\Big] \dd u \dd x.
\end{align*}
Consider a probability density $f_0\in C_c(\R^d)$ such that 
$(2v_d)^{-1}\indiq_{B(x_0,1)}\leq f_0 \leq \indiq_{B(x_0,2)}$, $v_d$ 
being the volume of the unit ball.
We write 
\begin{align}\label{a0}
\PR(U_{K_A}(X^{K_A}_{\tau+t})\geq D_A)
\leq&  2v_de^{C^{(1)}_A(\beta_0+1)} \sup_{u\in[0,1]}\intrd \Big[\int_{\{U_{K_A}\geq D_A\}} 
f^{K_A,x,\beta_u}_t(\dd y)\Big] f_0(x)\dd x\\
=& 2v_de^{C^{(1)}_A(\beta_0+1)} \sup_{u\in[0,1]}  \PR(U_{K_A}(Y^{A,u}_{t}) \geq D_A),\notag
\end{align}
where $(Y^{A,u}_t)_{t\geq 0}$ is the solution to \eqref{edsK} with $K=K_A$, 
starting from $Y_0\sim f_0$, with
$\beta_0$ replaced by $\beta_u$. We now denote by $f^{A,u}_t$ the density 
of $Y^{A,u}_t$ and use the Cauchy-Schwarz inequality to write
\begin{align}\label{a1}
\PR(U_{K_A}(Y^{A,u}_{t}) \geq D_A)=& \int_{\{U_{K_A}\geq D_A\}} f^{A,u}_t(x)\dd x \\
\leq&
\Big(\int_{\{U_{K_A}\geq D_A\}} \mu^{K_A}_{\beta_{t+u}}(x)\dd x\Big)^{1/2}
\Big(\int_{M_{K_A}} \frac{[f^{A,u}_t(x)]^2\dd x}
{\mu^{K_A}_{\beta_{t+u}}(x)}\Big)^{1/2}.\notag
\end{align}
By Lemma \ref{hksb}, we know that, since $\beta_u\geq \beta_0\geq b_{K_A}^{(1)}$,
$$
\int_{M_{K_A}} \frac{[f^{A,u}_t(x)]^2\dd x}{\mu^{K_A}_{\beta_{t+u}}(x)}\leq 2\lor
\int_{M_{K_A}} \frac{[f_0(x)]^2\dd x}{\mu^{K_A}_{\beta_{u}}(x)} \leq 2 \lor \int_{B(x_0,2)}
\frac{\dd x}{\mu^{K_A}_{\beta_{u}}(x)} = 2 \lor \int_{B(x_0,2)}\cZ_{\beta_u}^{K_A}e^{\beta_u U(x)}\dd x.
$$
Recalling the definition of $C^{(2)}_A$ and that 
$\cZ^{K_A}_{\beta_{u}}\leq (2L_{K_A})^d$, see Remark \ref{ttt}, we find
\begin{equation}\label{a2}
\int_{M_{K_A}} \frac{[f^{A,u}_t(x)]^2\dd x}{\mu^{K_A}_{\beta_{t+u}}(x)}\leq 2\lor[2^dv_d (2L_{K_A})^de^{C^{(2)}_A \beta_u}]
=2^dv_d (2L_{K_A})^de^{C^{(2)}_A \beta_u} .
\end{equation}
Next, since the volume of $M_{K_A}$ is smaller than $(2L_{K_A})^d$, we have
$$
\int_{\{U_{K_A}\geq D_A\}} \mu^{K_A}_{\beta_{t+u}}(x)\dd x
\leq \frac{(2L_{K_A})^d e^{-\beta_{t+u}D_A}}{\cZ^{K_A}_{\beta_{t+u}}}.
$$
By Remark \ref{ttt} again,
\begin{equation}\label{a3}
\int_{\{U_{K_A}\geq D_A\}} \mu^{K_A}_{\beta_{t+u}}(x)\dd x
\leq \frac{(2L_{K_A})^d (\beta_{t+u}+1)^d e^{-\beta_{t+u}D_A}}{\kappa_0}\leq C_A e^{-\beta_{t+u} (D_A-1)},
\end{equation}
for some constant $C_A>0$ of which we now allow the value to change from line to line.
Gathering \eqref{a0}-\eqref{a1}-\eqref{a2}-\eqref{a3}, we find
$$
\PR(U_{K_A}(X^{K_A}_{\tau+t})\geq D_A) 
\leq C_A e^{C^{(1)}_A (\beta_0+1)} \sup_{u\in[0,1]} e^{-\beta_{t+u} (D_A-1)/2 + C_A^{(2)} \beta_u/2}.
$$
Since $s\mapsto \beta_s$ is non-decreasing and $D_A-1>C_A^{(2)}$,
$$
\PR(U_{K_A}(X^{K_A}_{\tau+t})\geq D_A) 
\leq C_A e^{C^{(1)}_A \beta_t} \sup_{u\in[0,1]} e^{-\beta_{t+u} (D_A-1-C_A^{(2)})/2}\leq
C_A e^{-\beta_{t}(D_A-1-C_A^{(2)}-2C_A^{(1)})/2}.
$$
Recalling finally that $D_A=2C^{(1)}_A+C^{(2)}_A+1+4c$
and that $\beta_t=c^{-1}\log(e^{c\beta_0}+t)$, we conclude that
$$
\PR(U_{K_A}(X^{K_A}_{\tau+t})\geq D_A) \leq C_A e^{-2c\beta_{t}} = \frac{C_A}{(e^{c\beta_0}+t)^2}
$$
as desired.
\end{proof}

We finally give the

\begin{proof}[Proof of Proposition \ref{crucial}.]
We fix $A\geq 1$ and introduce $D_A$, $K_A=D_A+1$, $b^{(1)}_{K_A}$ 
and $C^{(3)}_A$ as in Lemma \ref{cs}.
We will show that one can find $b_A>b^{(1)}_{K_A}$ such that if $\beta_0>b_A$ and 
$x_0\in \{U\leq A\}$,
the solution $(X^{K_A}_t)_{t\geq 0}$ to \eqref{edsK} satisfies
$$
\PR\Big(\sup_{t\geq 0} U_{K_A}(X_t^{K_A}) \leq K_A\Big)\geq \frac12.
$$
By Remark \ref{compegpascomp}, this will show the result.
By Lemma \ref{cs}, we have $\sup_{[0,\tau]} U_{K_A}(X_t^{K_A})\leq D_A\leq K_A$ a.s.,
so that we only have to check that
$$
\PR\Big(\sup_{t\geq 0} U_{K_A}(X_{\tau+t}^{K_A}) \leq K_A\Big)\geq \frac12.
$$

\vip

We consider $\phi_A \in C^\infty(\R_+)$, with values in $[0,1]$, 
such that $\phi_A=0$ outside $[D_A,K_A]$ and such that $\phi_A((D_A+K_A)/2)=1$,
and we introduce $\psi_A=\phi_A \circ U_{K_A}: M_{K_A}\to [0,1]$. 
Setting 
$$
\cL_{\beta_t}^A\psi_A(x)=\frac12(\Delta \psi_A(x)-\beta_t\nabla \psi_A(x)\cdot\nabla U_{K_A}(x)),
$$
we have $|\cL_{\beta_t}^A\psi_A(x)|\leq C_A^{(4)} (1+\beta_t) \indiq_{\{U_{K_A}(x)\geq D_A\}}$, where
$C_A^{(4)}$ is a constant involving the supremum on $M_{K_A}$ 
of $U_{K_A}$ and its two first derivatives.

\vip

We now fix $b_A>b^{(1)}_{K_A}$ such that for all $\beta_0\geq b_A$,
$$
\int_0^\infty \frac{C^{(3)}_A C^{(4)}_A [1+\log(e^{c \beta_0}+t+1)]}
{c (e^{c \beta_0} + t)^2} \dd t
=\int_{e^{c\beta_0}}^\infty \frac{C^{(3)}_A C^{(4)}_A [1+\log(s+1)]}
{c  s^2} \dd s
\leq \frac 1{40}.
$$

By It\^o's formula and since $\psi_A(X_{\tau}^{K_A})=0$ 
(because $U_{K_A}(X_{\tau}^{K_A})\leq D_A$),
$$
\psi_A(X_{\tau+t}^{K_A})=M_t+R_t, 
$$
where
$(M_t)_{t\geq 0}$ is a martingale issued from $0$ 
and where 
$$R_t=\intot \cL_{\beta_{\tau+s}}^A\psi_A(X_{\tau+s}^{K_A})\dd s.$$
By Lemma \ref{cs}, since 
$|\cL^A_{\beta_{\tau+t}}\psi_A(x)|\leq C_A^{(4)}(1+\beta_{t+1})\indiq_{\{U_{K_A}(x)\geq D_A\}}$
and since $\beta_0\geq b_A\geq b^{(1)}_{K_A}$,
$$
\E\Big[\sup_{t\geq 0} |R_t|\Big] 
\leq C^{(4)}_A \int_0^\infty (1+\beta_{t+1})\PR(U_{K_A}(X_{\tau+t}^{K_A}) \geq D_A )\dd t
\leq \int_0^\infty \frac{C^{(3)}_A C^{(4)}_A (1+\beta_{t+1})}{(e^{c \beta_0} + t)^2}\dd t 
\leq \frac 1{40}.
$$
Consequently, for $E=\{\sup_{t\geq 0} |R_t|<1/10\}$, we have $\PR(E^c)\leq 1/4$.

\vip

On $E$, we have $M_t=M_{t\land\sigma}$, where
$\sigma=\inf\{t\geq 0 : M_t \notin [-1/10,11/10]\}$,
because $M_t+R_t=\psi_A(X_{\tau+t}^{K_A})$ takes values in $[0,1]$.
On
$\{\sup_{t\geq 0} U_{K_A}(X_{\tau+t}^{K_A})\geq K_A\}$, the process $\psi_A(X_{\tau+t}^{K_A})$ 
must up-cross
$[0,1]$ at least once, so that on $E\cap\{\sup_{t\geq 0} U_{K_A}(X_{\tau+t}^{K_A})\geq K_A\}$, 
the martingale
$M_t=M_{t\land\sigma}$ must up-cross $[1/10,9/10]$ at least once.
Hence 
$$
\PR\Big(\sup_{t\geq 0} U_{K_A}(X_{\tau+t}^{K_A})\geq K_A\Big) \leq \PR(E^c) + 
\PR(E,(M_{t\land\sigma})_{t\geq 0} \hbox{ up-crosses } [1/10,9/10]) \leq 1/4+p,
$$
where $p=\PR((M_{t\land\sigma})_{t\geq 0} \hbox{ up-crosses } [1/10,9/10])$.

\vip

By Doob's up-crossing inequality, see e.g. 
Revuz-Yor \cite[Proposition 2.1 page 61]{ry} we know that, 
for any continuous martingale $(Z_t)_{t\geq 0}$, any $a<b$, 
denoting by $U_{T,a,b}$ the number of up-crossings
of $[a,b]$ by $(Z_t)_{t\geq 0}$ during $[0,T]$, it holds 
that $(b-a)\E[U_{T,a,b}]\leq \E[(Z_T-a)_-]$.

\vip

Thus for $N_T$ the number of up-crossings of
$[1/10,9/10]$ by the martingale $(M_{t\land \sigma})_{t\geq 0}$ during $[0,T]$, it holds that
$$
p=\lim_{T\to\infty}\PR(N_T\geq 1)
\leq \lim_{T\to \infty} \E[N_T] \leq \lim_{T\to \infty} \frac{\E[(M_{T\land \sigma} - 1/10)_-]}{8/10}
\leq\frac{2/10}{8/10}=\frac14.
$$
We used that $M_{T\land \sigma}\geq -1/10$ by definition of $\sigma$.
We conclude that, for all $\beta>b_A$, we have $\PR(\sup_{t\geq 0} U_{K_A}(X_{\tau+t}^{K_A})\geq K_A) \leq 1/2$ as desired.
\end{proof}

\section{Success of the simulated annealing}\label{con}

We now show that no escape in large time implies the success of the simulated annealing.

\begin{proof}[Proof of Proposition \ref{mm}]
We assume $(A)$, fix $c>c_*$, $x_0\in \R^d$, $\beta_0 >0$ and consider the solution
$(X_t)_{t\geq 0}$ to \eqref{eds}. Since $\lim_{|x|\to \infty} U(x)=\infty$, our goal is to
show that for any fixed $\e>0$,
$$\lim_{t\to \infty}\PR\Big(\liminf_{s\to \infty} U(X_s)<\infty \hbox{ and } U(X_t)>\e\Big)=0.$$

{\it Step 1.} It suffices to show that for each $A\geq 1$, setting  
$$\Omega_A=\{\liminf_{s\to \infty} U(X_s)< A\},$$ 
it holds that
$\Omega_A\subset \{\sup_{s\geq 0} |X_s|<\infty\}$. 
Indeed, if this hold true, we fix $\eta>0$, consider $A_\eta>0$ large enough so that 
$\PR(A_\eta\leq \liminf_{s\to \infty} U(X_s)<\infty)<\eta$ and write
\begin{align*}
\PR\Big(\liminf_{s\to \infty} U(X_s)<\infty \hbox{ and } U(X_t)>\e\Big)
\leq& \eta + \PR(\Omega_{A_\eta} \hbox{ and } U(X_t)>\e)\\
\leq&  \eta+\PR\Big(\sup_{s\geq 0} |X_s|<\infty \hbox{ and } U(X_t)>\e\Big).
\end{align*}
Thus $\limsup_{t\to \infty}
\PR(\liminf_{s\to \infty} U(X_s)<\infty \hbox{ and } U(X_t)>\e)\leq \eta$ 
by Lemma \ref{fmt}. Since $\eta>0$ is arbitrarily small, the conclusion follows.

\vip

{\it Step 2.} We fix $A\geq 1$ and show that for $\Omega_A=\{\liminf_{s\to \infty} U(X_s)< A\}$, we have 
$\Omega_A\subset \{\sup_{s\geq 0} |X_s|<\infty\}$. 
\vip
We introduce $b_A>1$ and $K_A>A$ as in Proposition \ref{crucial} and 
consider $t_A\geq 0$ large enough so that $\beta_{t_A}\geq b_A$.
We set $S_0=t_A$ and, for all $k\geq 1$, 
$$T_{k}=\inf\{t>S_{k-1} : U(X_t)\leq A\} \quad\hbox{ and }\quad S_k=\inf\{t>T_k : U(X_t)\geq K_A\},$$ 
with the convention that $\inf \emptyset = \infty$.

\vip

We start from
\begin{align*}
\PR(S_{k+1}<\infty|S_k<\infty)=&\PR(T_{k+1}<\infty,S_{k+1}<\infty|S_k<\infty)\\
=&\E\Big[\indiq_{\{T_{k+1}<\infty\}} \PR(S_{k+1}<\infty |\cF_{T_{k+1}}) \Big|S_k<\infty\Big].
\end{align*}
But on $\{T_{k+1}<\infty\}$ and conditionally on $\cF_{T_{k+1}}$,
$(X_{T_{k+1}+t})_{t\geq 0}$ is a solution to \eqref{eds}, starting 
from $X_{T_{k+1}} \in \{U\leq A\}$,
with $\beta_0$ replaced by $\beta_{T_{k+1}}\geq \beta_{t_A}\geq b_A$. 
Hence, using Proposition \ref{crucial}, a.s.,
$$
\indiq_{\{T_{k+1}<\infty\}}\PR(S_{k+1}<\infty |\cF_{T_{k+1}})=\indiq_{\{T_{k+1}<\infty\}}
\PR\Big(\sup_{t\geq 0} U(X_{T_{k+1}+t})\geq K_A\Big)\leq 1/2.
$$
All this shows that for all $k\geq 1$, $\PR(S_{k+1}<\infty|S_k<\infty)\leq 1/2$. 

\vip

Consequently,
there a.s. exists $k\geq 1$ such that $S_k=\infty$, and we introduce
$$k_0=\inf\{k\geq 1 : S_k=\infty\}.$$ 
We then have $S_{k_0-1}<\infty=S_{k_0}$.
By definition of $\Omega_A$, it holds that $T_{k_0}<\infty$ on $\Omega_A$.
Since $U(X_t)< K_A$
for all $t\in [T_{k_0},S_{k_0})=[T_{k_0},\infty)$ (on $\Omega_A$), this implies that
$$\Omega_A\subset\Big\{\limsup_{s\to \infty} U(X_s)\leq K_A\Big\}
\subset \Big\{\sup_{s\geq 0} |X_s|<\infty\Big\}$$ 
as desired.
\end{proof}

We conclude the section with the

\begin{proof}[Proof of Theorem \ref{main}]
We assume $(A)$ and that there is $\alpha_0>0$ such that $\intrd e^{-\alpha_0 U(x)}\dd x <\infty$.
We fix $c>0$, $x_0 \in \R^d$, $\beta_0 >0$
and consider the unique solution $(X_t)_{t\geq 0}$
to \eqref{eds}.
By Proposition \ref{rec}, $\liminf_{t\to \infty} |X_t|<\infty$ a.s.
If moreover $c\!>\!c_*$, $\lim_{t\to \infty}U(X_t)\!=\! 0$ in probability by {Proposition \ref{mm}.}
\end{proof}

\section{Appendix: non-explosion}\label{imp3}

It remains to study the non-explosion of our process.
Surprisingly, this is rather tedious, except if assuming some Lyapunov condition,
for example that $-x\cdot \nabla U(x) \leq C(1+|x|^2)$, 
which forbids too nasty oscillations. We will prove the following result,
which is much stronger (but more natural) than what we really need,
since $U\geq 0$ under $(A)$.

\begin{thm}\label{nonex}
Assume that $U:\R^d \to \R$ and $\beta: \R_+\to (0,\infty)$
are of class $C^\infty$. Fix $x_0\in\R^d$ and 
consider the pathwise unique maximal solution $(X_t)_{t\in[0,\zeta)}$ to 
\begin{equation}\label{edsg}
X_t=x_0+B_t-\frac12\intot \beta_s\nabla U(X_s)\dd s, 
\end{equation}
where $\zeta=\lim_n \zeta_n$,
with $\zeta_n=\inf\{t\geq 0 : |X_t|\geq n\}$. Assume that
\begin{equation}\label{cu}
\hbox{there is $L>0$ such that for all $x\in \R^d$, }U(x)\geq -L(1+|x|^2).
\end{equation}
Then it holds that $\zeta=\infty$ a.s.
\end{thm}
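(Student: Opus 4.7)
The strategy, sketched in the introduction, is to show that the function
$$u(t,x)=\E_{t,x}\bigl[e^{-\zeta}\bigr],\qquad (t,x)\in\R_+\times\R^d,$$
vanishes identically (with the convention $e^{-\zeta}=0$ on $\{\zeta=\infty\}$); here $\E_{t,x}$ refers to the solution of \eqref{edsg} started from $x$ at time $t$, driven by an independent Brownian motion. Since $\zeta\geq t$, one has $0\leq u(t,x)\leq e^{-t}$, and non-explosion from $x_0$ at time $0$ is exactly $u(0,x_0)=0$.

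First, local strong existence and pathwise uniqueness for \eqref{edsg} up to $\zeta=\lim_n\zeta_n$ follow immediately from local Lipschitz continuity of $\nabla U$ and smoothness of $\beta$. Next, the strong Markov property applied with It\^o's formula to $e^{-s\wedge\zeta_n}u(s\wedge\zeta_n,X_{s\wedge\zeta_n})$, combined with interior regularity of uniformly parabolic equations with smooth coefficients on bounded cylinders (e.g.~Friedman \cite{f}), shows that $u\in C^{1,2}(\R_+\times\R^d)$ and satisfies, in the classical sense,
$$\partial_t u+\frac12\Delta u-\frac12\beta_t\nabla U\cdot\nabla u=u.$$

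The crux is to show $u\equiv 0$ without any pointwise bound on $\nabla U$ or $\Delta U$. The essential observation is that $\cL_t=\frac12\Delta-\frac12\beta_t\nabla U\cdot\nabla$ is symmetric with respect to the time-dependent Gibbs weight $\rho_t(x)=e^{-\beta_t U(x)}$, so the equation rewrites in divergence form as
$$\rho_t\partial_t u+\frac12\,\ddiv(\rho_t\nabla u)=\rho_t u.$$
Following the ideas of Grigor'yan \cite{gr1} and Ichihara \cite{ichi}, I would test this against $u\chi_R$ for a Grigor'yan-type space-time cutoff
$\chi_R(t,x)=\eta(|x|/R)\exp\bigl(-\lambda|x|^2/(T-t)\bigr)$
on $[0,T)\times\R^d$, and integrate by parts in $x$ and $t$. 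Because the drift has been fully absorbed into the divergence, \emph{no factor of $\nabla U$ or $\Delta U$ survives the integration by parts}; the only non-sign-definite terms come from the boundary/cutoff layer and from $\partial_t\rho_t=-\beta'_t U\rho_t$. The lower bound \eqref{cu} is used precisely here: it gives $\rho_t(x)\leq e^{L\beta_T(1+|x|^2)}$ on $[0,T]$, so that for $\lambda$ chosen large enough (depending only on $L$ and $\beta_T$) every integral is absolutely convergent and the cutoff error terms vanish as $R\to\infty$. The resulting limit identity forces $\int_0^T\!\int_{\R^d}\rho_t\,u^2\,e^{-\lambda|x|^2/(T-t)}\,\dd x\,\dd t=0$, whence $u\equiv 0$ on $[0,T)\times\R^d$; since $T>0$ is arbitrary, $\zeta=\infty$ a.s.

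The main obstacle is this last energy computation: a standard Khasminskii/Lyapunov approach would require a bound on $x\cdot\nabla U(x)$ or on $\Delta U(x)$, which \eqref{cu} certainly does not supply. This is exactly why the rewriting in divergence form against the Gibbs weight is essential, and why one must carefully adapt the Grigor'yan--Ichihara cutoff argument to the inhomogeneous setting, keeping track of the time-dependent weight $\rho_t$ and of its time-derivative. The matching between the Gaussian decay in $x$ of the cutoff and the Gaussian growth of $\rho_t$ permitted by \eqref{cu} is the technical heart of the appendix, and is what allows the argument to succeed with no local hypothesis whatsoever on $\nabla U$.
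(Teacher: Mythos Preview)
Your overall philosophy---study $u(t,x)=\E_{t,x}[e^{-\zeta}]$, exploit the divergence structure $\rho_t\partial_t u+\tfrac12\ddiv(\rho_t\nabla u)=\rho_t u$ with $\rho_t=e^{-\beta_t U}$, and run a Grigor'yan--Ichihara energy argument---is exactly the paper's. But two concrete points prevent your sketch from closing.

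First, the specific cutoff $\chi_R(t,x)=\eta(|x|/R)\exp\bigl(-\lambda|x|^2/(T-t)\bigr)$ has the wrong sign. For the \emph{backward} equation $\partial_t u+\cL_{\beta_t}u=u$, the localized energy $\varphi(t)=\int u^2\chi^2\rho_t$ satisfies, after the integration by parts you describe,
$$\varphi'(t)\ \geq\ -\int u^2\bigl(|\nabla\chi|^2-\partial_t(\chi^2)\bigr)_+\rho_t\ -\ K(t),$$
where $K$ collects the $\partial_t\rho_t$ contribution. The useful sign condition is thus $|\nabla\chi|^2\leq\partial_t(\chi^2)$, i.e.\ writing $\chi=e^{-\xi}$, one needs $2\partial_t\xi+|\nabla\xi|^2\leq 0$. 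With your $\xi=\lambda|x|^2/(T-t)$ one has $\partial_t\xi>0$, so $2\partial_t\xi+|\nabla\xi|^2>0$ everywhere on $B_R^c$, and the error term $\int u^2(|\nabla\chi|^2-\partial_t\chi^2)_+\rho_t$ is neither small nor absorbable---it carries the factor $\tfrac{2\lambda(1+2\lambda)|x|^2}{(T-t)^2}$, unbounded near $t=T$. The very feature you want (vanishing of $\chi$ at $t=T$ so as not to need $u(T,\cdot)=0$) is what destroys the sign. A cutoff satisfying $2\partial_t\xi+|\nabla\xi|^2\leq 0$ necessarily stays bounded away from zero at the terminal time, so some terminal information on $u$ is unavoidable.

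Second, and consequently, the paper's proof has an ingredient you omit: it first reduces to $\beta$ eventually constant (Remark~\ref{rkne}(i)), proves the time-homogeneous case separately (Proposition~\ref{ich}, via a different radial energy $\Phi_{n,\alpha}(r)$ in the spirit of Ichihara), and only then propagates $u(t_0,\cdot)=0$ backward to $t=0$ in steps of size $\delta_0$, using the cutoff $\alpha_{t_1,R}=\eta_R\exp\bigl(-\tfrac{(|x|-R)_+^2}{4(2\delta+t-t_1)}\bigr)$, which does satisfy $2\partial_t\xi+|\nabla\xi|^2=0$. Your proposal tries to bypass this terminal input and cannot. A smaller point: you assert $u\in C^{1,2}$ directly, but the paper avoids this by working with the smooth $u_n(t,x)=\E_{t,x}[e^{-\zeta_n}]$ (Lemma~\ref{back}) and passing to the limit only after the integral estimates.
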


Since $\nabla U$ is locally Lipschitz continuous, 
the existence of a pathwise unique possibly exploding solution is classical.
This result is rather natural: as is well-known, the solution to \eqref{edsg}, 
with $U(x)=-(1+|x|^2)^{\alpha}$ explodes if and only if $\alpha>1$. 
The difficulty relies in the fact that we do not want to assume any local property on $\nabla U$.
Let us mention that the proof below, assuming that $U\geq 0$, would be slightly simpler but less transparent.

\vip

Our proof is inspired by methods found in
Ichihara \cite{ichi}, who uses Dirichlet forms, and Grigor'yan \cite{gr1}
and \cite[Section 9]{gr2}, who studies manifold-valued diffusions. Both deal
with the time-homogeneous 
case ($\beta_t=\beta_0$ for all $t\geq 0$). 
In \cite{gr2}, non-explosion is proved under some
very weak conditions (allowing e.g. for some additional logarithmic factors in \eqref{cu}),
while \cite{ichi} is more stringent (roughly, he treats only the case where $U(x)\geq -L(1+|x|)$).

\vip

We start with the following remark.

\begin{rk}\label{rkne} 
(i) To prove Theorem \ref{nonex}, one may assume additionally that 
\begin{equation}\label{ep}
\hbox{there is $t_0>0$ such that $\beta_t=\beta_{t_0}$ for all $t\geq t_0$.}
\end{equation}

(ii) For any $x_0 \in \R^d$, for $(X_t)_{t\in[0,\zeta)}$ the solution to \eqref{edsg} and for
$t>0$, the measure $f_t$ defined by $f_{t}(A)=\PR(\zeta>t,X_t\in A)$ is absolutely continuous
with respect to the Lebesgue measure on $\R^d$.
\vip
(iii) It suffices to prove Theorem \ref{nonex} for a.e. $x_0\in \R^d$.
\end{rk}

\begin{proof} 
(i) Assume that Theorem \ref{nonex} holds under the additional condition \eqref{ep}
and consider $\beta:\R_+\to (0,\infty)$ of class $C^\infty$.
We fix $T>0$, introduce
$\bar \beta:\R_+\to (0,\infty)$ of class $C^\infty$ satisfying \eqref{ep},
such that $\beta_t=\bar \beta_t$ on $[0,T]$ and we introduce the corresponding solution
$(\bar X_t)_{t\in [0,\bar \zeta)}$. We have $(X_t)_{t\in [0,T\land \zeta)}=(\bar X_t)_{t\in [0,T\land \bar \zeta)}$,
whence in particular $\{\zeta\leq T\}=\{\bar \zeta\leq T\}$. Since $\bar \zeta=\infty$ a.s., we conclude
that $\PR(\zeta\leq T)=0$. Since $T$ is arbitrarily large, this implies
that $\zeta=\infty$ a.s.

\vip

(ii) Fix a Lebesgue-null set $A\in \R^d$.
Since $\nabla U$ is bounded on compact sets, we deduce from the Girsanov theorem that
$\PR(\zeta_n>t,X_t\in A)=0$ for all $n\geq 1$. By monotone
convergence, we conclude that $\PR(\zeta>t,X_t\in A)=0$ as desired.

\vip

(iii) Assume that for any $\beta: \R_+\to (0,\infty)$ of class $C^\infty$, $\PR_{0,x}(\zeta<\infty)=0$ for
a.e. $x\in\R^d$. Then for a given $\beta: \R_+\to (0,\infty)$ of class $C^\infty$, 
for all $t\geq 0$, $\PR_{t,x}(\zeta<\infty)=0$ for a.e. $x\in\R^d$. Since $\zeta>0$ a.s. by continuity,
we may write, for all $x_0\in\R^d$,  
$$\PR_{0,x_0}(\zeta<\infty)=\lim_{t\to 0}\PR_{0,x_0}(t<\zeta<\infty)= \lim_{t\to 0}
\E_{0,x_0}[\indiq_{\{\zeta>t\}}\PR_{t,X_t}(\zeta<\infty)]=0.
$$
We used the Markov property and that $\indiq_{\{\zeta>t\}}\PR_{t,X_t}(\zeta<\infty)=0$ a.s. when $t>0$ by point (ii)
and since $\PR_{t,x}(\zeta<\infty)=0$ for a.e. $x\in\R^d$.
\end{proof}

Above and in the whole section, we denote by $\E_{t_0,x_0}$ the expectation 
concerning the process starting from $x_0\in\R^d$ at time $t_0\geq 0$: 
under $\E_{t_0,x_0}$, the process $(X_t)_{t\geq 0}$ solves (in law) the S.D.E.
$X_t=x_0+B_t-\frac12\intot \beta_{t_0+s}\nabla U(X_s)\dd s$.

\vip
In the whole section, we denote by $v_d$ the volume of the unit ball and, for $r>0$, we set
$$
B_r=\{x\in\R^d : |x|<r\},\quad \bB_r=\{x\in\R^d :|x|\leq r\} \quad \hbox{and}\quad \dB_r=\{x\in\R^d :|x|=r\}.
$$
 
We will study of the following Kolmogorov backward equation, which consists of a particular
case of the Feynman-Kac formula.

\begin{lem}\label{back}
Adopt the assumptions of Theorem \ref{nonex} and suppose \eqref{ep}.
Fix $n\geq 1$ and $\alpha>0$. There is a function $u_{n,\alpha} \in C^{1,2}(\R_+\times \bB_n)$
such that $u_{n,\alpha}=1$ on $\R_+\times \dB_n$ and
\begin{equation}\label{bb}
\partial_t u_{n,\alpha}(t,x) + \cL_{\beta_t} u_{n,\alpha}(t,x)=\alpha u_{n,\alpha}(t,x) 
\quad \hbox{for } (t,x)\in [0,\infty)\times B_n.
\end{equation}
For $\varphi:\R^d\to \R$ of class $C^2$, $\beta>0$ and $x\in \R^d$,  
we have set 
$$
\cL_\beta \varphi(x)=\frac12 [\Delta \varphi(x) - \beta \nabla U(x)\cdot \nabla \varphi(x)].
$$
For any $t\geq 0$, any $x \in \bB_n$, it holds that $u_{n,\alpha}(t,x)=\E_{t,x}[\exp(-\alpha\zeta_{n})]$.
\end{lem}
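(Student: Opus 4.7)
The plan is to construct $u_{n,\alpha}$ by splitting at $t_0$ (the time from assumption \eqref{ep} after which $\beta$ is constant) into an elliptic piece on $[t_0,\infty)\times \bB_n$ and a parabolic piece on $[0,t_0]\times \bB_n$, and then to identify it with the stated Feynman--Kac expectation via It\^o's formula.

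For the elliptic piece, observe that $\cL_{\beta_{t_0}}$ is uniformly elliptic on $\bB_n$ with smooth coefficients and $\dB_n$ is smooth, so classical Schauder theory (e.g.\ Friedman \cite{f}) gives a unique $v\in C^2(\bB_n)$ solving $\cL_{\beta_{t_0}}v-\alpha v=0$ on $B_n$ with $v=1$ on $\dB_n$; the standard Dirichlet Feynman--Kac formula then yields $v(x)=\E_x[e^{-\alpha\sigma_n}]$, where $\sigma_n$ is the exit time from $B_n$ of the time-homogeneous diffusion with generator $\cL_{\beta_{t_0}}$. For the parabolic piece, solve on $[0,t_0]\times\bB_n$ the backward IBVP $\partial_t w+\cL_{\beta_t}w-\alpha w=0$ with terminal data $w(t_0,\cdot)=v$ and lateral data $w=1$ on $\dB_n$; the compatibility $v|_{\dB_n}=1$ holds, the operator is uniformly parabolic with smooth coefficients, and Friedman \cite{f} gives a unique $w\in C^{1,2}([0,t_0]\times \bB_n)$. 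Set $u_{n,\alpha}=w$ on $[0,t_0]\times\bB_n$ and $u_{n,\alpha}(t,x)=v(x)$ for $t\geq t_0$. Smoothness across $t=t_0$ is automatic: $\partial_t u_{n,\alpha}(t_0^-,\cdot)=\alpha v-\cL_{\beta_{t_0}}v=0=\partial_t u_{n,\alpha}(t_0^+,\cdot)$, spatial derivatives agree from both sides, and since $\beta$ is $C^\infty$ with $\beta_t$ constant on $[t_0,\infty)$ (so all derivatives $\beta^{(k)}(t_0^-)=0$ for $k\geq 1$), one checks by induction that all mixed derivatives match. Hence $u_{n,\alpha}\in C^{1,2}(\R_+\times \bB_n)$ satisfies \eqref{bb} globally and $u_{n,\alpha}=1$ on $\R_+\times \dB_n$.

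For the Feynman--Kac identification, fix $t\geq 0$, $x\in\bB_n$, $T>0$ and work under $\PR_{t,x}$. Apply It\^o's formula to $M_s=e^{-\alpha s}u_{n,\alpha}(t+s,X_s)$: by \eqref{bb} the finite-variation part vanishes, so $M$ is a local martingale, and continuity of $u_{n,\alpha}$ on the compact cylinder $[t,t+T]\times\bB_n$ makes $(M_{s\wedge\zeta_n})_{s\in[0,T]}$ a bounded martingale. Optional stopping gives
\begin{equation*}
u_{n,\alpha}(t,x)=\E_{t,x}\bigl[e^{-\alpha(\zeta_n\wedge T)}u_{n,\alpha}(t+\zeta_n\wedge T,X_{\zeta_n\wedge T})\bigr].
\end{equation*}
On $\{\zeta_n\leq T\}$, $X_{\zeta_n}\in\dB_n$ so $u_{n,\alpha}=1$ there; on $\{\zeta_n>T\}$, the prefactor $e^{-\alpha T}\to 0$ while $u_{n,\alpha}$ is bounded. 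Letting $T\to\infty$ yields $u_{n,\alpha}(t,x)=\E_{t,x}[e^{-\alpha\zeta_n}]$.

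The main obstacle is producing a single $C^{1,2}$ solution on the \emph{unbounded} cylinder $\R_+\times\bB_n$ with prescribed lateral boundary condition but no natural terminal condition; the split at $t_0$ afforded by \eqref{ep} reduces this to a bounded-time parabolic IBVP glued to a time-independent elliptic Dirichlet problem, with $v|_{\dB_n}=1$ providing the necessary compatibility and the elliptic PDE providing the required smooth matching of $\partial_t u_{n,\alpha}$ at $t_0$. Everything else (Schauder estimates, classical Dirichlet Feynman--Kac, optional stopping) is standard.
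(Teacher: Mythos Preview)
Your proof is correct and follows essentially the same approach as the paper: split at $t_0$, solve the elliptic Dirichlet problem $\cL_{\beta_{t_0}}v=\alpha v$ on $B_n$ with $v=1$ on $\dB_n$ (via Friedman \cite{f}), solve the backward parabolic IBVP on $[0,t_0]\times\bB_n$ with terminal data $v$ and lateral data $1$, glue the two pieces, and then identify with $\E_{t,x}[e^{-\alpha\zeta_n}]$ by applying It\^o's formula to $e^{-\alpha s}u_{n,\alpha}(t+s,X_s)$ stopped at $\zeta_n$ and letting $T\to\infty$. Your only addition is the explicit verification that $\partial_t u_{n,\alpha}$ matches across $t=t_0$ (using $\alpha v-\cL_{\beta_{t_0}}v=0$), which the paper leaves implicit.
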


\begin{proof} This relies one more time on classical results found in Friedman \cite{f}.
We fix some $t_0\geq 0$ such that $\beta_t=\beta_{t_0}$ for all $t\geq t_0$.
All the coefficients of \eqref{bb} are smooth and bounded, since restricted to $\bB_n$,
whose boundary is smooth. Hence all the results cited below do indeed apply.

\vip

By \cite[Chapter 3, Theorem 19]{f}, there exists a solution 
$v_{n,\alpha} \in C^2(\bB_n)$ to the elliptic boundary problem
$\cL_{\beta_{t_0}} v_{n,\alpha} = \alpha v_{n,\alpha}$   on $B_n$ and $v_{n,\alpha}=1$ on 
$\dB_n$.

\vip

By \cite[Chapter 3, Theorem 7]{f} (after time-reversing),
there exists a solution $w_{n,\alpha}$ belonging to ${C^{1,2}([0,t_0]\times \bB_n)}$ to the parabolic problem
$\partial_t  w_{n,\alpha}+\cL_{\beta_t}  w_{n,\alpha} =  \alpha w_{n,\alpha}$ on $(0,t_0)\times B_n$, 
with boundary condition $w_{n,\alpha}=1$ on $[0,t_0]\times \dB_n$ and terminal condition 
$w_{n,\alpha}(t_0,x)=v_{n,\alpha}(x)$ on $B_n$.

\vip

The function $u_{n,\alpha}$ defined by $u_{n,\alpha}(t,x)=v_{n,\alpha}(x)$ if $t\geq t_0$ and $u_{n,\alpha}(t,x)=w_{n,\alpha}(t,x)$ if $t\in [0,t_0]$
satisfies the conditions of the statement.

\vip

Finally, using the It\^o formula and \eqref{bb}, one checks that for all $t \geq 0$, all $x \in B_n$,
all $T\geq 0$, 
$$
\E_{t,x}[u_{n,\alpha}(T\land\zeta_{n},X_{T\land\zeta_n})e^{-\alpha(T\land\zeta_n)}]=u_{n,\alpha}(t,x).
$$
We let $T\to\infty$ and find that $\E_{t,x}[e^{-\alpha\zeta_n}]=u_{n,\alpha}(t,x)$ 
by dominated convergence and since $u_{n,\alpha}(\zeta_n,X_{\zeta_n})=1$ a.s.
\end{proof}

\begin{rk}\label{gr}
By the Green formula, for all $\beta>0$, for all $r>0$, for all $\varphi : \bB_r\to \R$ of class $C^1$
and $\psi : \bB_r\to \R$ of class $C^2$,
\begin{align*}
2\int_{B_r} \varphi(x) \cL_\beta \psi(x) e^{-\beta U(x)} \dd x 
=& -\int_{B_r} \nabla\varphi(x) \cdot
\nabla \psi(x) e^{-\beta U(x)} \dd x \\
&+ \int_{\dB_r} \varphi(x) [\nabla \psi(x)\cdot \nu(x)] e^{-\beta U(x)} \dd S,
\end{align*}
where $\nu(x)=x/|x|$ is the unit vector normal to $\dB_r$ and where
$\dd S$ is its surface element.
\end{rk} 

Although this is already known, see Grigor'yan \cite[Section 9]{gr2}, 
we recall for the sake of completeness
how to treat the homogeneous case. We use an approach closer to the one
of Ichihara \cite{ichi}, who however assumes more than \eqref{cu} and whose proof is
more intricate and relies on the study of
$v(x)=\E_x[e^{-\sigma_1}]$, where $\sigma_1=\inf\{t\geq 0 : |X_t|\leq 1\}$.

\begin{prop}\label{ich}
Assume that $U:\R^d\to \R$ is $C^\infty$ and satisfies \eqref{cu}. If $\beta_t=\beta_0>0$ for all $t\geq 0$,
then $\PR_x(\zeta<\infty)=0$ for a.e. $x \in \R^d$.
\end{prop}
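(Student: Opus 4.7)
The plan is to show that the decreasing pointwise limit $v_\infty(x):=\lim_n v_n(x)$ of the functions $v_n(x):=u_{n,1}(0,x)=\E_x[e^{-\zeta_n}]$ vanishes for a.e.\ $x\in\R^d$. By Lemma \ref{back} (specialized to $\alpha=1$ in the homogeneous case $\beta_t\equiv\beta_0$, where $u_{n,\alpha}$ is $t$-independent), $v_n\in C^2(\bB_n)$ satisfies $\cL_{\beta_0}v_n=v_n$ on $B_n$, $v_n=1$ on $\dB_n$ and $0\le v_n\le 1$; moreover $v_n\searrow v_\infty$ because $\zeta_n\uparrow \zeta$. Since $v_\infty(x)=\E_x[e^{-\zeta}]$, having $v_\infty=0$ a.e.\ is equivalent to $\PR_x(\zeta<\infty)=0$ for a.e.\ $x$.

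The first analytic input is a weighted Caccioppoli inequality. For any $\chi\in C_c^\infty(\R^d)$, take $n$ large enough that $\mathrm{supp}\,\chi\subset B_n$ and apply the Green formula of Remark \ref{gr} with $\varphi=\chi^2 v_n$ and $\psi=v_n$: the boundary term vanishes, and substituting $\cL_{\beta_0}v_n=v_n$ together with the pointwise inequality $|2\chi v_n\nabla\chi\cdot\nabla v_n|\le \chi^2|\nabla v_n|^2+v_n^2|\nabla\chi|^2$ yields
\[
2\intrd\chi^2 v_n^2 e^{-\beta_0 U(x)}\dd x\le \intrd v_n^2|\nabla\chi|^2 e^{-\beta_0 U(x)}\dd x.
\]
Monotone convergence $v_n^2\searrow v_\infty^2$ on both sides then gives, for every $\chi\in C_c^\infty(\R^d)$,
\[
2\intrd\chi^2 v_\infty^2 e^{-\beta_0 U(x)}\dd x\le \intrd v_\infty^2|\nabla\chi|^2 e^{-\beta_0 U(x)}\dd x.
\]

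Setting $F(\rho):=\intrd\indiq_{\{|x|<\rho\}}v_\infty^2 e^{-\beta_0 U(x)}\dd x$ and taking $\chi=\chi_{r,R}$ a radial cutoff equal to $1$ on $B_r$, $0$ outside $B_R$ and with $|\nabla\chi_{r,R}|\le c_0/(R-r)$, the above specializes to the iteration inequality
\[
F(R)\ge \Big(1+\frac{2(R-r)^2}{c_0^2}\Big)F(r), \qquad 0<r<R.
\]
On the other hand, hypothesis \eqref{cu} immediately yields the weighted volume bound $M(\rho):=\intrd\indiq_{\{|x|<\rho\}}e^{-\beta_0 U(x)}\dd x\le v_d\rho^d e^{\beta_0 L(1+\rho^2)}$, and trivially $F(\rho)\le M(\rho)$. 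The proof is completed by an exhaustion argument \emph{à la} Grigor'yan \cite[Section 9]{gr2}: assuming for contradiction that $F(\rho_0)>0$ for some $\rho_0>0$, one iterates the inequality along a sequence $\rho_0<\rho_1<\rho_2<\cdots$ with steps $\delta_k:=\rho_{k+1}-\rho_k$ tuned so that the product $\prod_k(1+2\delta_k^2/c_0^2)$ eventually outgrows the a priori bound $F(\rho_k)\le M(\rho_k)\lesssim e^{\beta_0 L\rho_k^2}$. This contradiction forces $F\equiv 0$, i.e.\ $v_\infty=0$ a.e.

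The main obstacle is this last iteration step. Hypothesis \eqref{cu} puts us exactly at the borderline of the Grigor'yan criterion $\int^\infty r/\log M(r)\dd r=\infty$, and a naive doubling $\rho_{k+1}=2\rho_k$ is too crude (the lower bound $\log F(\rho_k)\gtrsim k^2$ one gets that way is dwarfed by $\log M(\rho_k)\sim 4^k$). The delicate choice of the increments $\delta_k$, so that the accumulated product wins against the Gaussian-type growth of $M$, is the heart of the argument and is where the computations of \cite{gr1,ichi} must be carefully adapted.
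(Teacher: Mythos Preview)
Your Caccioppoli derivation is correct, and so is your iteration inequality $F(R)\ge\bigl(1+2(R-r)^2/c_0^2\bigr)F(r)$. The gap is precisely where you locate it, but it is not a matter of a ``delicate choice'' of increments: with $\alpha$ frozen at $1$, no choice of increments can close the argument in the borderline quadratic case \eqref{cu}. Indeed, for any partition $r_0<r_1<\dots<r_N=R$ with $\delta_k=r_{k+1}-r_k$, one has
\[
\log F(R)-\log F(r_0)\ge \sum_{k=0}^{N-1}\log\Bigl(1+\tfrac{2}{c_0^2}\delta_k^2\Bigr).
\]
But $\sup_{\delta>0}\delta^{-1}\log(1+2\delta^2/c_0^2)=:C_0<\infty$, so the right-hand side is at most $C_0\sum_k\delta_k=C_0(R-r_0)$ regardless of the partition. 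Thus the iteration only forces $\log F(R)$ to grow \emph{linearly} in $R$, which can never contradict $\log F(R)\le \log M(R)\sim\beta_0 L R^2$. This is exactly why Ichihara's elliptic method, which your approach mimics, needs the stronger hypothesis $U(x)\ge -L(1+|x|)$, as the paper notes before the proposition. The Grigor'yan argument you invoke in \cite[Section~9]{gr2} is \emph{parabolic}: it works with $w(t,x)=\PR_x(\zeta<t)$ and exploits the extra time variable to win at the Gaussian borderline; it does not transplant to the resolvent equation with a single fixed $\alpha$.

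The paper's remedy is to keep $\alpha>0$ as a free parameter throughout. Writing $\cL_{\beta_0}u_{n,\alpha}=\alpha u_{n,\alpha}$ and setting $\Phi_{n,\alpha}(r)=\int_{B_r}(2\alpha u_{n,\alpha}^2+|\nabla u_{n,\alpha}|^2)e^{-\beta_0 U}\dd x$, Remark~\ref{gr} gives the sharp differential inequality $\Phi_{n,\alpha}'(r)\ge 2\sqrt{2\alpha}\,\Phi_{n,\alpha}(r)$, hence exponential growth at rate $2\sqrt{2\alpha}$. Choosing $n=r+\sqrt{\alpha}/(\beta_0 L)$ and using the a~priori bound $\Phi_{n,\alpha}(n)\le 2\alpha v_d n^d e^{\beta_0 L(1+n^2)}$ yields $\int_{B_r}u_\alpha^2 e^{-\beta_0 U}\le C_r e^{-\alpha/(2\beta_0 L)}$. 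A short Markov-property argument then produces a competing lower bound of the form $q\,e^{-2\alpha\eta}$ with $\eta$ as small as one likes, and taking $\eta<1/(4\beta_0 L)$ gives a contradiction as $\alpha\to\infty$. The extra parameter $\alpha$ is doing the work that the time variable does in Grigor'yan's parabolic proof; without it, your elliptic iteration is one order of growth short.
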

\begin{proof}
For $\alpha>0$ and $n\geq 1$, we set $u_\alpha(x)=\E_{x}[e^{-\alpha \zeta}]$ and 
$u_{n,\alpha}(x)=\E_{x}[e^{-\alpha\zeta_n}]$. We divide the proof into 2 steps.
We recall that $L$ is defined in \eqref{cu}.

\vip

{\it Step 1.} Here we prove that for all $r>0$, there is a constant $C_r>0$ such that
\begin{equation}\label{rr1}
\forall \alpha>0,\quad \int_{B_r} u_{\alpha}^2(x)e^{-\beta_0U(x)}\dd x \leq C_r e^{-\alpha/(2\beta_0L)}.
\end{equation}

By Proposition
\ref{back}, $u_{n,\alpha} \in C^{2}(\bB_n)$,  $u_{n,\alpha}=1$ on $\dB_n$ and
$\cL_{\beta_0} u_{n,\alpha}=\alpha u_{n,\alpha}$ on $B_n$.
For any $r \in (0,n]$, we have
\begin{align*}
\Phi_{n,\alpha}(r):=&\int_{B_r} (2\alpha u_{n,\alpha}^2(x)+|\nabla u_{n,\alpha}(x)|^2)e^{-\beta_0U(x)}\dd x \\
=& \int_{\dB_r} u_{n,\alpha}(x)[\nabla u_{n,\alpha}(x)\cdot \nu(x)]e^{-\beta_0U(x)} \dd S.
\end{align*}
Indeed, it suffices to write $2\alpha u_{n,\alpha}^2=2 u_{n,\alpha}\cL_{\beta_0} u_{n,\alpha}$ 
and to use Remark \ref{gr} with $\varphi=\psi=u_{n,\alpha}$.
Hence for all $r \in (0,n]$, since $2\alpha a^2+b^2 \geq 2\sqrt{2\alpha }ab$,
\begin{align*}
\Phi_{n,\alpha}'(r)=&\int_{\dB_r} (2\alpha u_{n,\alpha}^2(x)+|\nabla u_{n,\alpha}(x)|^2)e^{-\beta_0U(x)}\dd S \\
\geq& 2\sqrt{2\alpha}\int_{\dB_r} u_{n,\alpha}(x)|\nabla u_{n,\alpha}(x)|e^{-\beta_0U(x)} \dd S
\geq 2\sqrt{2\alpha}\Phi_{n,\alpha}(r).
\end{align*}
Thus $\Phi_{n,\alpha}(r) \leq \Phi_{n,\alpha}(n)e^{2\sqrt{2\alpha}(r-n)}$ for all $r \in (0,n]$.

\vip
 
But, writing  $2\alpha u_{n,\alpha}=2 \cL_{\beta_0} u_{n,\alpha}$ and using Remark \ref{gr} 
with $\varphi=1$ and $\psi=u_{n,\alpha}$,
$$
2\int_{B_n} \alpha u_{n,\alpha}(x) e^{-\beta_0U(x)}\dd x = 
\int_{\dB_n} [\nabla u_{n,\alpha}(x)\cdot \nu(x)]e^{-\beta_0U(x)} \dd S
=\Phi_{n,\alpha}(n),
$$
because $u_{n,\alpha}=1$ on $\dB_n$. Hence $\Phi_{n,\alpha}(n)\leq 2\alpha \int_{B_n} e^{-\beta_0U(x)}\dd x 
\leq 2\alpha v_d n^d e^{\beta_0L(1+n^2)}$ by \eqref{cu}.
All this shows that for all $\alpha>0$, all $n\geq 1$, all 
$r\in (0,n]$,
\begin{equation*}
\int_{B_r} u_{n,\alpha}^2(x)e^{-\beta_0U(x)}\dd x\leq \frac{1}{2\alpha}\Phi_{n,\alpha}(r) \leq v_d n^d e^{\beta_0L(1+n^2)}
e^{2\sqrt{2\alpha}(r-n)}.
\end{equation*}
But $u_\alpha(x)\leq u_{n,\alpha}(x)$ for all $n\geq 1$, all $x\in \R^d$. 
Hence for $r>0$ fixed, with the choice $n=r+\sqrt \alpha/(\beta_0L)$,
we conclude that, for some constant $C_r>0$ depending on $r$ (and on $L$ and $d$), 
$$
\int_{B_r} u_{\alpha}^2(x)e^{-\beta_0U(x)}\dd x\leq v_d(r+\sqrt \alpha/(\beta_0L))^de^{\beta_0L(1+2r^2)+2\alpha/(\beta_0L)}
e^{-2\sqrt{2} \alpha /(\beta_0L) }\leq C_r e^{-\alpha/(2\beta_0 L)}.
$$

{\it Step 2.} We now conclude. Assume by contradiction that 
$\intrd\PR_x(\zeta<\infty)\dd x>0$ and fix $\eta>0$.
\vip

It holds that 
$\intrd\PR_x(\zeta\leq \eta)\dd x>0$. Else we would have, by the Markov property,
$$
\intrd \PR_x(\zeta\leq 2\eta)\dd x = \intrd \PR_x(\eta<\zeta\leq 2\eta)\dd x=
\intrd \E_x[\indiq_{\{\zeta>\eta\}}\PR_{X_\eta}(\zeta\leq \eta)]\dd x=0
$$
thanks to  Remark \ref{rkne}-(ii). Iterating the argument, we would find that
$\intrd \PR_x(\zeta\leq K\eta)\dd x=0$ for all $K\geq 1$, whence $\intrd\PR_x(\zeta<\infty)\dd x=0$.

\vip

Consequently, we can find $r_0>0$ such that $q:=\int_{B_{r_0}}[\PR_x(\zeta\leq\eta)]^2e^{-\beta_0 U(x)}\dd x>0$. 
We then have, 
since $u_\alpha(x)=\E_x[e^{-\alpha \zeta}]\geq e^{-\alpha\eta}\PR_x(\zeta\leq \eta)$,
\begin{align*}
\forall \alpha>0,\quad 
\int_{B_{r_0}} u_{\alpha}^2(x)e^{-\beta_0U(x)}\dd x \geq q e^{-2\alpha\eta}.
\end{align*}
With the choice $\eta= 1/(8\beta_0L)$, this contradicts \eqref{rr1}.
\end{proof}

Using in particular some clever ideas found in Grigor'yan \cite[Section 9]{gr2}, who
studies, in the homogeneous case, the P.D.E. satisfied by
$w(t,x)=\PR_{x}[\zeta<t]$, we can now give the

\begin{proof}[Proof of Theorem \ref{nonex}]
We consider $U:\R^d\to \R$ and $\beta:\R_+\to(0,\infty)$ of class $C^\infty$.
We recall that $\zeta=\lim_n \zeta_n$, with $\zeta_n=\inf\{t\geq 0 : |X_t|\geq n\}$.
We set $u_n(t,x)=\E_{t,x}[e^{-\zeta_n}]$ and $u(t,x)=\E_{t,x}[e^{-\zeta}]$, omitting
the subscript $\alpha$ since we now always work with $\alpha=1$.
By Remark \ref{rkne}, we may moreover suppose that there is $t_0>0$
such that $\beta_t=\beta_{t_0}$ for all $t\geq t_0$,
and it suffices to prove that $u(0,x)=0$ for a.e. $x\in\R^d$.
Since $u(t_0,x)=0$ for a.e. $x\in\R^d$ by Proposition \ref{ich}, it is sufficient to prove that
\begin{equation}\label{ooo}
\exists \delta_0>0, \forall t_1\in [0,t_0], 
\Big\{\intrd u(t_1,x)\dd x =0\Big\} \Rightarrow \Big\{\forall t\in [(t_1-\delta_0)\lor 0,t_1], \intrd u(t,x)\dd x
=0\Big\}.
\end{equation}

{\it Step 1.} Here we check that 
for all $a\in [0,1]$, all $b\geq 0$, all $\e>0$, all $\eta>0$,
$$
ab \leq \frac{a}{\eta \e} + \frac{e^{\eta b - 1/\e}}{\eta}.
$$
We fix $a\in [0,1]$, $\e>0$, and $\eta>0$ and study $f(b)=\frac{a}{\e} + e^{\eta b - 1/\e}-\eta ab$.
We have $f(0)>0$, $f(\infty)=\infty$ and $f'(b)=\eta[e^{\eta b - 1/\e}-a]$.
If $a\leq e^{-1/\e}$, then $f$ is non-decreasing, so that $f$ is nonnegative on $\R_+$. 
If now $a>e^{-1/\e}$,
then $f$ attains its minimum at $b_0=\frac1\eta[\log a+\frac1\e]$ and $f(b_0)=a-a\log a >0$.

\vip

{\it Step 2.} Here we prove that are some constants $\delta>0$ and $\kappa_0>0$ 
such that for all $t_1 \in [0,t_0]$, all $R> 1$, there is a $C^1$ function 
$\alpha_{t_1,R}:[(t_1-\delta)\lor 0,t_1]\times\R^d\to [0,1]$ enjoying the properties that
$$
\alpha_{t_1,R}= 1 \hbox{ on } [(t_1-\delta)\lor 0,t_1]\times B_R, \qquad 
\alpha_{t_1,R} = 0 \hbox{ on } [(t_1-\delta)\lor 0,t_1]\times B^c_{4R}
$$
and, for all $t\in [(t_1-\delta)\lor 0,t_1]$,
$$
m_{t_1,R}(t):=\intrd  \Big(|\nabla \alpha_{t_1,R}(t,x)|^2 -\partial_t [\alpha^2_{t_1,R}(t,x)]\Big)_+e^{-\beta_t U(x)}
\dd x  \leq \kappa_0 e^{-R^2}.
$$

We start with a $C^\infty$-function 
$\eta_R:\R^d \to [0,1]$ such that $\eta_R=1$ on $B_{2R}$, $\eta_R=0$ on
$B^c_{4R}$ and $|\nabla \eta_R|\leq 1/R$.
\vip
We also introduce the $C^1$ function on $[(t_1-\delta)\lor 0,t_1]\times\R^d$ defined by
$$
\xi_{t_1,R}(t,x)=\frac{(|x|-R)_+^2}{2(2\delta+t-t_1)}, 
$$
which equals $0$ on $[(t_1-\delta)\lor 0,t_1]\times B_R$ and solves 
$\partial_t \xi_{t_1,R} + \frac12|\nabla \xi_{t_1,R}|^2=0$ on $[(t_1-\delta)\lor 0,t_1]\times B^c_R$.
\vip
If $\delta>0$ is small enough, the function $\alpha_{t_1,R}(t,x)=\eta_R(x)\exp(-\xi_{t_1,R}(t,x)/2)$ 
enjoys the desired properties:
it is $C^1$, $[0,1]$-valued, we have $\alpha_{t_1,R}(t,x)=1$ if $|x|\leq R$ and $\alpha_R(t,x)=0$ if $|x|\geq 4R$,
and we have
\begin{align*}
|\nabla \alpha_{t_1,R}|^2 -\partial_t [\alpha^2_{t_1,R}]
=& \Big|\nabla \eta_{t_1,R} -\frac12 \eta_{t_1,R} \nabla\xi_{t_1,R}\Big|^2 e^{-\xi_{t_1,R}}  + \eta_{t_1,R}^2 e^{-\xi_{t_1,R}}  \partial_t \xi_{t_1,R}\\
\leq& \Big(2 |\nabla \eta_{t_1,R}|^2 + \frac12\eta_{t_1,R}^2 |\nabla\xi_{t_1,R}|^2 + \eta_{t_1,R}^2\partial_t \xi_{t_1,R}\Big)e^{-\xi_{t_1,R}}
= 2 |\nabla \eta_{t_1,R}|^2 e^{-\xi_{t_1,R}}.
\end{align*}
Since $|\nabla \eta_{t_1,R}|\leq R^{-1}\indiq_{B_{4R}\setminus B_{2R}}$ and since
$\xi_{t_1,R}\geq \frac{R^2}{4\delta}$ on $[(t_1-\delta)\lor 0,t_1]\times B^c_{2R}$,
we deduce from \eqref{cu}, that 
\begin{align*}
m_{t_1,R}(t)\leq& 2R^{-2} e^{-R^2/(4\delta)}e^{\beta_t L(1+16R^2)} \hbox{ Vol} (B_{4R}\setminus B_{2R}).
\end{align*}
Since $\beta$ is bounded on $[0,t_0]$, it indeed suffices to choose $\delta>0$ small enough to complete the step.

\vip

{\it Step 3.} We consider $\delta>0$ as in Step 2, fix $t_1 \in [0,t_0]$ and set,
for $R>1$, $n\geq 5R$ and $t\in [(t_1-\delta)\lor 0,t_1]$, 
$$
\varphi_{n,t_1,R}(t)=\intrd u_n^2(t,x)\alpha_{t_1,R}^2(t,x)e^{-\beta_t U(x)}\dd x.
$$
The goal of this step is to verify that there is a constant $\kappa_1>0$ such that
\begin{equation}\label{tbtb}
\forall t_1 \in [0,t_0], \; \forall R>1, \; \forall n\geq 5R, 
\; \forall t\in [(t_1-\delta)\lor 0,t_1], 
\;\;\varphi_{n,t_1,R}'(t) \geq -\kappa_1\Big[ R^2 \varphi_{n,t_1,R}(t) + e^{-R^2}\Big].
\end{equation}

By \eqref{bb} (with $\alpha=1$), 
we know that $\partial_t u_n(t,x)=u_n(t,x)-\cL_{\beta_t}u_n(t,x)$ on $[0,\infty)\times B_n$.
Since Supp$\;\alpha_{t_1,R}(t,\cdot) \subset B_{4R}$ and since $n\geq 5R$, we may write
$$
\varphi_{n,t_1,R}'(t)=I_{n,t_1,R}(t)+J_{n,t_1,R}(t)-K_{n,t_1,R}(t), 
$$
where
\begin{align*}
I_{n,t_1,R}(t)=&\intrd 2[u_n(t,x)-\cL_{\beta_t}u_n(t,x)]u_n(t,x)\alpha^2_{t_1,R}(t,x)e^{-\beta_t U(x)}\dd x,\\
J_{n,t_1,R}(t)=&\intrd u_n^2(t,x)\partial_t [\alpha^2_{t_1,R}(t,x)]e^{-\beta_t U(x)}\dd x,\\
K_{n,t_1,R}(t)=&\beta'_t\intrd U(x)u_n^2(t,x)\alpha_{t_1,R}^2(t,x)e^{-\beta_t U(x)}\dd x.
\end{align*}

Using Remark \ref{gr} with any $r>4R$, since $\alpha_{t_1,R}(t,\cdot)$ 
is supported in $B_{4R}$, we have
\begin{align*}
I_{n,t_1,R}(t)\geq& - 2 \intrd \cL_{\beta_t}u_n(t,x) [u_n(t,x)\alpha^2_{t_1,R}(t,x)]e^{-\beta_t U(x)}\dd x\\
=& \intrd \nabla u_n(t,x) \cdot \nabla [u_n(t,x)\alpha_{t_1,R}^2(t,x)]e^{-\beta_t U(x)}\dd x\\
=& \intrd \Big[|\nabla u_n(t,x)|^2\alpha_{t_1,R}^2(t,x)+ 2 u_n(t,x)\alpha_{t_1,R}(t,x)\nabla u_n(t,x)\cdot \nabla
\alpha_{t_1,R}(t,x)\Big] e^{-\beta_t U(x)}\dd x\\
\geq& - \intrd u_n^2(t,x) |\nabla \alpha_{t_1,R}(t,x)|^2 e^{-\beta_t U(x)}\dd x.
\end{align*}
We finally used that $a^2-2ab\geq-b^2$ with $a=|\nabla u_n|\alpha_{t_1,R}$ and 
and $b= u_n |\nabla \alpha_{t_1,R}|$. Thus
\begin{align*}
I_{n,t_1,R}(t)+J_{n,t_1,R}(t)\geq & \intrd u_n^2(t,x) \Big(\partial_t [\alpha^2_{t_1,R}(t,x)]
- |\nabla \alpha_{t_1,R}(t,x)|^2 \Big) e^{-\beta_t U(x)}\dd x\\
\geq & - \intrd  \Big( |\nabla \alpha_{t_1,R}(t,x)|^2-\partial_t [\alpha^2_{t_1,R}(t,x)]\Big)_+ 
e^{-\beta_t U(x)}\dd x \geq - \kappa_0 e^{-R^2}
\end{align*}
by Step 2. We used that $u_n^2(t,x)\in [0,1]$.

\vip
We next write $K_{n,t_1,R}(t)=K^{(1)}_{n,t_1,R}(t)+K^{(2)}_{n,t_1,R}(t)$, where
$$
K^{(1)}_{n,t_1,R}(t)=\beta'_t\intrd \indiq_{\{U(x)\leq 0\}}U(x)u_n^2(t,x)\alpha_{t_1,R}^2(t,x)e^{-\beta_t U(x)}\dd x
\leq C R^2 \varphi_{n,t_1,R}(t),
$$
because $|\beta'|$ is bounded on $[0,t_0]$ and because $|\indiq_{\{U\leq 0\}}U|\leq L(1+(4R)^2)$ 
on Supp$\;\alpha_{t_1,R}(t,\cdot) \subset B_{4R}$ 
by \eqref{cu}, and
$$
K^{(2)}_{n,t_1,R}(t)=\beta'_t\intrd \indiq_{\{U(x)\geq 0\}}U(x)u_n^2(t,x)\alpha_{t_1,R}^2(t,x)e^{-\beta_t U(x)}\dd x.
$$
By Step 1 with $a=u_n^2(t,x)\alpha_{t_1,R}^2(t,x)\in [0,1]$, $b=U(x)\geq 0$,
$\eta=\beta_t>0$ and $\e=(2R)^{-2}$,
\begin{align*}
K^{(2)}_{n,t_1,R} \leq& \frac{|\beta'_t|}{\beta_t} \int_{B_{4R}} \Big[2R^2 u_n^2(t,x)\alpha_{t_1,R}^2(t,x)
+ e^{\beta_t U(x)-2R^2}\Big]e^{-\beta_t U(x)}\dd x\\
=&  \frac{|\beta'_t|}{\beta_t} \Big[2R^2 \varphi_{n,t_1,R}(t) + \hbox{Vol}(B_{4R})e^{-2R^2} \Big]\\
\leq & C \Big[R^2 \varphi_{n,t_1,R}(t) + e^{-R^2} \Big],
\end{align*}
since $|\beta'_t|/\beta_t$ is bounded on $[0,t_0]$. This ends the step.

\vip

{\it Step 4.} We now conclude that \eqref{ooo} holds true with $\delta_0=\min\{\delta,1/(2\kappa_1)\}$, where
$\delta>0$ and $\kappa_1>0$ were introduced in Steps 2 and 3.
We thus fix $t_1 \in [0,t_0]$ and assume that $\intrd u(t_1,x)\dd x=0$.
\vip

Integrating \eqref{tbtb}, we find that for all $t \in [(t_1-\delta_0)\lor 0,t_1]$, all
$R>1$ and all $n\geq 5R$,
$$
\varphi_{n,t_1,R}(t) \leq \varphi_{n,t_1,R}(t_1)e^{\kappa_1 R^2(t_1-t)}+ R^{-2}e^{-R^2}[e^{\kappa_1 R^2(t_1-t)}-1]
\leq e^{R^2/2}\varphi_{n,t_1,R}(t_1)+e^{-R^2/2},
$$
the last inequality following from the fact that $t_1-t\leq\delta_0\leq1/(2\kappa_1)$.

\vip
Since $\lim_n u_n(t,x)=u(t,x)$ by dominated convergence and since $\alpha_{t_1,R}(t,\cdot)$ is compactly supported,
we have $\lim_{n} \varphi_{n,t_1,R}(t)= \varphi_{t_1,R}(t)$, where
$\varphi_R(t)= \intrd u^2(t,x)\alpha_{t_1,R}^2(t,x)e^{-\beta_t U(x)}\dd x$.
We thus find, for all $t \in [(t_1-\delta_0)\lor 0,t_1]$, all $R>1$
$$
\varphi_{t_1,R}(t) \leq  e^{R^2/2}\varphi_{t_1,R}(t_1)+e^{-R^2/2}.
$$
But since $u(t_1,x)=0$ for a.e. $x\in \R^d$, it holds that
$\varphi_{t_1,R}(t_1)=0$ for all $R>1$. Hence for all fixed $t \in [(t_1-\delta_0)\lor 0,t_1]$, 
all fixed $R_0>0$, all $R>R_0>1$, since $\alpha_{t_1,R}(t,\cdot)\geq\indiq_{B_{R_0}}$,
$$
\int_{B_{R_0}} u^2(t,x)e^{-\beta_t U(x)}\dd x \leq \varphi_{t_1,R}(t) \leq e^{-R^2/2},
$$
whence $\int_{B_{R_0}} u^2(t,x)e^{-\beta_t U(x)}\dd x=0$ and thus $u(t,x)=0$ for a.e. $x\in\R^d$ as desired.
\end{proof}

\end{document}